\def\imod#1{\allowbreak\mkern10mu({\operator@font mod}\,\,#1)}
\newtheorem{theorem}{Theorem}[section]
\newtheorem{lemma}{Lemma}[section]
\newtheorem{corollary}{Corollary}[section]
\newtheorem{fact}{Fact}[section]
\newtheorem{conjecture}{Conjecture}[section]
\theoremstyle{definition}
\newtheorem{definition}{Definition}[section]
\newtheorem{remark}{Remark}[section]
\begin{document}
\begin{center}
\begin{singlespace}
\vskip 1cm{\LARGE\bf An Extension of the Abundancy Index to Certain Quadratic Rings
\vskip 1cm
\large
Colin Defant\footnote{This work was supported by National Science Foundation grant no. 1262930.}\footnote{Colin Defant\\ 18434 Hancock Bluff Rd. \\ Dade City, FL 33523}\footnote{2010 {\it Mathematics Subject Classification}:  Primary 11R11; Secondary 11N80.\\
\emph{Keywords: } Abundancy index, quadratic ring, solitary number, friendly number.}\\
Department of Mathematics\\
University of Florida\\
United States\\
cdefant@ufl.edu}
\end{singlespace}
\end{center}
\vskip .2 in

\begin{abstract}
We begin by introducing an extension of the traditional abundancy index to imaginary quadratic rings with unique factorization. After showing that many of the properties of the traditional abundancy index continue to hold in our extended form, we investigate what we call $n$-powerfully solitary numbers in these rings. This definition serves to extend the concept of solitary numbers, which have been defined and studied in the integers. We end with some open questions and a conjecture.    
\end{abstract}
\section{Introduction} 

Throughout this paper, we will let $\mathbb{N}$ denote the set of positive integers, and we will let $\mathbb{N}_0$ denote the set of nonnegative integers.
\par 
The arithmetic functions $\sigma_k$ are defined, for every integer $k$, by \\ 
$\displaystyle{\sigma_k(n)=\sum_{\substack{c\vert n\\c>0}}c^k}$, and it is conventional to write $\sigma_1=\sigma$. It is well-known that, for each integer $k\neq 0$, $\sigma_k$ is multiplicative and satisfies $\displaystyle{\sigma_k (p^\alpha)=\frac{p^{k(\alpha+1)}-1}{p^k-1}}$ for all (integer) primes $p$ and positive integers $\alpha$. The abundancy index of a positive integer $n$ is defined by $\displaystyle{I(n)=\frac{\sigma(n)}{n}}$. Using the formulas
$\displaystyle{\sigma(n)=\prod_{p^{\alpha}\parallel n}\frac{p^{\alpha+1}-1}{p-1}}$ and $\displaystyle{\sigma_{-1}(n)=\prod_{p^{\alpha}\parallel n}\frac{p^{-(\alpha+1)}-1}{p^{-1}-1}}$, it is easy to see that $I=\sigma_{-1}$. Some of the most common questions associated with the abundancy index are those related to friendly numbers. 
\par
Two or more distinct positive integers are said to be friends (with each other) if they have the same abundancy index.  For example, $I(6)=I(28)=I(496)=2$, so $6$, $28$, and $496$  are friends. A positive integer that has at least one friend is said to be friendly, and a positive integer that has no friends is said to be solitary. Clearly, $1$ is solitary as $I(n)>1=I(1)$ for any positive integer $n>1$. It is also not difficult to show, using the fact that $I=\sigma_{-1}$, that every prime power is solitary. 
In the next section, we extend the notions of the abundancy index and friendliness to imaginary quadratic integer rings that are also unique factorization domains. Observing the infinitude of possible such generalizations, we note four important properties of the traditional abundancy index that we wish to preserve (possibly with slight modifications). 
\begin{itemize}
\item The range of the function $I$ is a subset of the interval $[1,\infty)$.
\item If $n_1$ and $n_2$ are relatively prime positive integers, then \\ $I(n_1n_2)=I(n_1)I(n_2)$. 
\item If $n_1$ and $n_2$ are positive integers such that $n_1\vert n_2$, then $I(n_1)\leq I(n_2)$, with equality if and only if $n_1=n_2$. 
\item All prime powers are solitary. 
\end{itemize}
\par 
For any square-free integer $d$, let $\mathcal O_{\mathbb{Q}(\sqrt{d})}$ be the quadratic integer ring given by \[\mathcal O_{\mathbb{Q}(\sqrt{d})}=\begin{cases} \mathbb{Z}[\frac{1+\sqrt{d}}{2}], & \mbox{if } d\equiv 1\imod{4}; \\ \mathbb{Z}[\sqrt{d}], & \mbox{if } d\equiv 2, 3 \imod{4}. \end{cases}\] 
\par 
Throughout the remainder of this paper, we will work in the rings $\mathcal O_{\mathbb{Q}(\sqrt{d})}$ for different specific or arbitrary values of $d$. We will use the symbol ``$\vert$" to mean ``divides" in the ring $\mathcal O_{\mathbb{Q}(\sqrt{d})}$ in which we are working. 
Whenever we are working in a ring other than $\mathbb{Z}$, we will make sure to emphasize when we wish to state that one integer divides another in $\mathbb{Z}$. 
For example, if we are working in $\mathbb{Z}[i]$, the ring of Gaussian integers, we might say that $1+i\vert 1+3i$ and that $2\vert 6$ in $\mathbb{Z}$. We will also refer to primes in $\mathcal O_{\mathbb{Q}(\sqrt{d})}$ as ``primes," whereas we will refer to (positive) primes in $\mathbb{Z}$ as ``integer primes." 
Furthermore, we will henceforth focus exclusively on values of $d$ for which $\mathcal O_{\mathbb{Q}(\sqrt{d})}$ is a unique factorization domain and $d<0$. In other words, $d\in K$, where we will define $K$ to be the set $\{-163,-67,-43,-19,-11,-7,-3,-2,-1\}$. The set $K$ is known to be the complete set of negative values of $d$ for which $\mathcal O_{\mathbb{Q}(\sqrt{d})}$ is a unique factorization domain \cite{Stark67}.  
\par 
For now, let us work in a ring $\mathcal O_{\mathbb{Q}(\sqrt{d})}$ such that $d\!\in\! K$. For an element $a+b\sqrt{d}\in\mathcal O_{\mathbb{Q}(\sqrt{d})}$ with $a,b\in \mathbb{Q}$, we define the conjugate by $\overline{a+b\sqrt{d}}=a-b\sqrt{d}$. We also define the norm of an element $z$ by $N(z)=z\overline{z}$ and the absolute value of $z$ by $\vert z\vert=\sqrt{N(z)}$. From now on, we will assume familiarity with these objects and their properties (for example, $\overline{z_1z_2}=\overline{z_1}\hspace{0.75 mm}\overline{z_2}$ and $N(z)\in \mathbb{N}_0$), which are treated in Keith Conrad's online notes \cite{Conrad}. For $x,y\in\mathcal O_{\mathbb{Q}(\sqrt{d})}$, we say that $x$ and $y$ are associated, denoted $x\sim y$, if and only if $x=uy$ for some unit $u$ in the ring $\mathcal O_{\mathbb{Q}(\sqrt{d})}$. Furthermore, we will make repeated use of the following well-known facts. 
\begin{fact} \label{Fact1.1}
Let $d\!\in\! K$. If $p$ is an integer prime, then exactly one of the following is true. 
\begin{itemize}
\item $p$ is also a prime in $\mathcal O_{\mathbb{Q}(\sqrt{d})}$. In this case, we say that $p$ is inert in $\mathcal O_{\mathbb{Q}(\sqrt{d})}$. 
\item $p\sim \pi^2$ and $\pi\sim\overline{\pi}$ for some prime $\pi\in \mathcal O_{\mathbb{Q}(\sqrt{d})}$. In this case, we say $p$ ramifies (or $p$ is ramified) in $\mathcal O_{\mathbb{Q}(\sqrt{d})}$. 
\item $p=\pi\overline{\pi}$ and $\pi\not\sim\overline{\pi}$ for some prime $\pi\in\mathcal O_{\mathbb{Q}(\sqrt{d})}$. In this case, we say $p$ splits (or $p$ is split) in $\mathcal O_{\mathbb{Q}(\sqrt{d})}$.
\end{itemize}
\end{fact}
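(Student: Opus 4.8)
The plan is to let the norm function do all the work, exploiting its multiplicativity together with unique factorization in $\mathcal O_{\mathbb{Q}(\sqrt{d})}$. First I would record three preliminary observations valid for $d\in K$ (so $d<0$, whence $N(z)\in\mathbb{N}_0$ for every $z$): that $N(z)=1$ if and only if $z$ is a unit; that $N$ is multiplicative; and, most importantly, that if $N(\pi)$ is an integer prime then $\pi$ is prime in $\mathcal O_{\mathbb{Q}(\sqrt{d})}$. This last fact follows because a factorization $\pi=\alpha\beta$ would give $N(\alpha)N(\beta)=N(\pi)$, forcing one of $N(\alpha),N(\beta)$ to equal $1$ and hence one of $\alpha,\beta$ to be a unit.

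Given an integer prime $p$, I would factor it into primes in the unique factorization domain $\mathcal O_{\mathbb{Q}(\sqrt{d})}$, writing $p=\pi_1\cdots\pi_k$ with each $\pi_i$ prime. Applying the norm yields $p^2=N(p)=\prod_{i=1}^{k}N(\pi_i)$, where each $N(\pi_i)>1$ is an integer dividing $p^2$ in $\mathbb{Z}$. Since the only divisors of $p^2$ exceeding $1$ are $p$ and $p^2$, each factor $N(\pi_i)$ lies in $\{p,p^2\}$, and this forces $k\in\{1,2\}$. If $k=1$, then $p$ is itself (associate to) a prime, giving the inert case.

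The heart of the argument is the case $k=2$, where necessarily $N(\pi_1)=N(\pi_2)=p$. Conjugating the equation $p=\pi_1\pi_2$ and using $\pi_1\overline{\pi_1}=N(\pi_1)=p$, unique factorization forces $\pi_2\sim\overline{\pi_1}$; writing $\pi=\pi_1$, so that $\pi_2=u\overline{\pi}$ for some unit $u$, the relation $p=\pi_1\pi_2=u\pi\overline{\pi}=uN(\pi)=up$ pins down $u=1$, and hence $p=\pi\overline{\pi}$ exactly. I would then split according to whether $\pi\sim\overline{\pi}$: if so, then $p\sim\pi^2$ and we are in the ramified case, while if not we are in the split case.

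Finally I would verify that the three alternatives are genuinely mutually exclusive, so that exactly one holds. If $p$ were prime it could not simultaneously be associate to $\pi^2$ nor equal to a product $\pi\overline{\pi}$ of two nonunits, which rules out overlap of the first case with the other two; and the ramified and split cases are separated by the condition $\pi\sim\overline{\pi}$ versus $\pi\not\sim\overline{\pi}$, a dichotomy that unique factorization shows is well defined independently of the chosen prime factor. I expect the main obstacle to be precisely this bookkeeping in the $k=2$ case: carefully tracking the unit so that the factorization reads exactly $p=\pi\overline{\pi}$, and confirming via uniqueness of factorization that the ramified and split descriptions cannot both apply to the same $p$.
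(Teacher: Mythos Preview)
The paper does not actually prove Fact~\ref{Fact1.1}; it is introduced with the phrase ``we will make repeated use of the following well-known facts'' and is stated without argument (the reader is implicitly referred to Conrad's notes \cite{Conrad}). Your norm-based argument is the standard proof and is correct. One minor simplification: in the $k=2$ step you need not introduce the unit $u$ at all, since from $\pi_1\overline{\pi_1}=N(\pi_1)=p=\pi_1\pi_2$ you can cancel $\pi_1$ in the integral domain to obtain $\pi_2=\overline{\pi_1}$ on the nose, giving $p=\pi\overline{\pi}$ immediately.
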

\begin{fact} \label{Fact1.2}
Let $d\!\in\! K$. If $\pi\!\in\!\mathcal O_{\mathbb{Q}(\sqrt{d})}$ is a prime, then exactly one of the following is true. 
\begin{itemize}
\item $\pi\sim q$ and $N(\pi)=q^2$ for some inert integer prime $q$. 
\item $\pi\sim\overline{\pi}$ and $N(\pi)=p$ for some ramified integer prime $p$. 
\item $\pi\not\sim\overline{\pi}$ and $N(\pi)=N(\overline{\pi})=p$ for some split integer prime $p$. 
\end{itemize}
\end{fact}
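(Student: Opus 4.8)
The plan is to show that every prime $\pi$ in $\mathcal{O}_{\mathbb{Q}(\sqrt{d})}$ ``lies above'' a single integer prime $p$, and then to read off the three alternatives directly from the trichotomy of Fact~\ref{Fact1.1} applied to that $p$.

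First I would note that since $\pi$ is prime it is not a unit, so $N(\pi)=\pi\overline{\pi}$ is an integer greater than $1$. Factoring $N(\pi)$ into integer primes in $\mathbb{Z}$ and viewing this factorization inside $\mathcal{O}_{\mathbb{Q}(\sqrt{d})}$, the relation $\pi\mid N(\pi)$ together with the primality of $\pi$ (here using that the ring is a unique factorization domain, so Euclid's lemma applies) forces $\pi\mid p$ for some integer prime $p$ dividing $N(\pi)$. This $p$ is the integer prime sitting beneath $\pi$.

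Next I would invoke Fact~\ref{Fact1.1} for this $p$ and split into its three cases. If $p$ is inert, then $p$ is itself prime in the ring; since $\pi\mid p$ and both are irreducible, $\pi\sim p$ and $N(\pi)=N(p)=p^2$, giving the first alternative with $q=p$. If $p$ ramifies, write $p\sim\pi_0^2$ with $\pi_0\sim\overline{\pi_0}$; from $\pi\mid p$ and primality I get $\pi\sim\pi_0$, whence $\pi\sim\overline{\pi}$, and comparing norms in $p\sim\pi_0^2$ yields $N(\pi)=N(\pi_0)=p$, the second alternative. If $p$ splits, write $p=\pi_0\overline{\pi_0}$ with $\pi_0\not\sim\overline{\pi_0}$; then $\pi$ divides one of the factors, say $\pi\sim\pi_0$ (the case $\pi\sim\overline{\pi_0}$ being symmetric), so $\pi\not\sim\overline{\pi}$ and $N(\pi)=N(\pi_0)=p$, the third alternative.

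Finally I would verify that the three possibilities are mutually exclusive, so that \emph{exactly} one holds. The first case is distinguished from the other two by the value of the norm: there $N(\pi)=q^2$ is the square of an integer prime, whereas in the last two $N(\pi)=p$ is an integer prime, and a prime cannot equal the square of a prime. The second and third are separated by the dichotomy $\pi\sim\overline{\pi}$ versus $\pi\not\sim\overline{\pi}$. The only step demanding genuine care is the first one, namely extracting a well-defined integer prime beneath $\pi$ and pinning down the associate relations; once that is in place, everything afterward is a direct translation through Fact~\ref{Fact1.1}, and the remaining norm computations and exclusivity checks are routine.
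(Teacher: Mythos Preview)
The paper does not actually prove Fact~\ref{Fact1.2}; it is listed among the ``well-known facts'' that the reader is assumed to know, with Conrad's notes cited for background. Your argument is correct and is the standard one: pull back $\pi$ to the rational prime $p$ it divides via $\pi\mid N(\pi)$, then read off the three alternatives from the trichotomy in Fact~\ref{Fact1.1}, with mutual exclusivity following from the norm values and the $\pi\sim\overline{\pi}$ dichotomy.
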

\begin{fact} \label{Fact1.3}
Let $\mathcal O_{\mathbb{Q}(\sqrt{d})}^*$ be the set of units in the ring $\mathcal O_{\mathbb{Q}(\sqrt{d})}$. Then $\mathcal O_{\mathbb{Q}(\sqrt{-1})}^*=\{\pm 1,\pm i\}$, $\displaystyle{\mathcal O_{\mathbb{Q}(\sqrt{-3})}^*=\left\{\pm 1,\pm \frac{1+\sqrt{-3}}{2},\pm \frac{1-\sqrt{-3}}{2}\right\}}$, and $\mathcal O_{\mathbb{Q}(\sqrt{d})}^*=\{\pm 1\}$ \\ 
whenever $d\in K\backslash\{-1,-3\}$. 
\end{fact}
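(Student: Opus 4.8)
The plan is to reduce the entire statement to the single observation that an element $z \in \mathcal{O}_{\mathbb{Q}(\sqrt{d})}$ is a unit if and only if $N(z) = 1$. First I would prove this equivalence: if $z$ is a unit with inverse $w$, then multiplicativity of the norm gives $N(z)N(w) = N(1) = 1$; since $N(z), N(w) \in \mathbb{N}_0$, this forces $N(z) = 1$. Conversely, if $N(z) = 1$, then $z\overline{z} = 1$ exhibits $\overline{z}$ as an inverse of $z$, so $z$ is a unit. Because $d < 0$, the norm is a positive definite integral form, so the equation $N(z) = 1$ has only finitely many solutions, and the whole problem becomes the enumeration of these solutions.

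Next I would split according to the congruence class of $d$ modulo $4$, which controls the shape of the ring. For $d \equiv 2, 3 \pmod 4$ (so $d \in \{-1, -2\}$ within $K$), a general element is $a + b\sqrt{d}$ with $a, b \in \mathbb{Z}$, and $N(a + b\sqrt{d}) = a^2 - d b^2 = a^2 + |d|\, b^2$. For $d \equiv 1 \pmod 4$ (the remaining seven values of $K$), a general element is $\frac{a + b\sqrt{d}}{2}$ with $a, b \in \mathbb{Z}$ of the same parity, and here $N = \frac{a^2 + |d|\, b^2}{4}$, so the unit condition $N(z) = 1$ becomes $a^2 + |d|\, b^2 = 4$.

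Then I would solve these two Diophantine equations directly. In the first case, $a^2 + |d|\, b^2 = 1$ forces $b = 0$ as soon as $|d| \geq 2$, leaving $a = \pm 1$ and hence only the units $\pm 1$; the single exception is $d = -1$, where $b = \pm 1$ is also admissible and produces the extra units $\pm i$. In the second case, $a^2 + |d|\, b^2 = 4$ forces $b = 0$ as soon as $|d| \geq 7$ (true for every $d \equiv 1 \pmod 4$ in $K$ except $d = -3$), giving $a = \pm 2$ and again only $\pm 1$; for $d = -3$ the equation $a^2 + 3b^2 = 4$ additionally admits $a = \pm 1,\ b = \pm 1$, and checking the same-parity constraint confirms that all of $\pm 1,\ \frac{\pm 1 \pm \sqrt{-3}}{2}$ are units.

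I do not expect a genuine obstacle here; the content is entirely the positive-definiteness of the norm, which collapses each case to a trivial bound on $b$. The only points requiring care are bookkeeping: remembering the same-parity requirement on $a$ and $b$ in the $d \equiv 1 \pmod 4$ case, and isolating $d = -1$ and $d = -3$ as the two values of $|d|$ small enough to admit a nonzero $b$.
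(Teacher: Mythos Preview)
Your argument is correct and is the standard one. Note, however, that the paper does not actually prove Fact~\ref{Fact1.3}; it is introduced with the phrase ``we will make repeated use of the following well-known facts'' and stated without justification (the surrounding material cites Conrad's notes for background). So there is no proof in the paper to compare against --- your write-up simply supplies the omitted verification, and it does so cleanly.
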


\section{The Extension of the Abundancy Index}

For a nonzero complex number $z$, let $\arg (z)$ denote the argument, or angle, of $z$. We convene to write $\arg (z)\in [0,2\pi)$ for all $z\in\mathbb{C}$. For each $d\in K$, we define the set $A(d)$ by 
\[A(d)=\begin{cases} \{z\in\mathcal O_{\mathbb{Q}(\sqrt{d})} \backslash\{0\}: 0\leq \arg (z)<\frac{\pi}
{2}\}, & \mbox{if } d=-1; \\ \{z\in\mathcal O_{\mathbb{Q}(\sqrt{d})} \backslash\{0\}: 0\leq \arg (z)<\frac{\pi}
{3}\}, & \mbox{if } d=-3; \\ \{z\in\mathcal O_{\mathbb{Q}(\sqrt{d})} \backslash\{0\}: 0\leq \arg (z)<\pi\}, & \mbox{otherwise}. \end{cases}\] 
Thus, every nonzero element of $\mathcal O_{\mathbb{Q}(\sqrt{d})}$ can be written uniquely as a unit times a product of primes in $A(d)$. Also, every $z\in\mathcal O_{\mathbb{Q}(\sqrt{d})}\backslash\{0\}$ is associated to a unique element, which we will call $B(z)$, of $A(d)$. We are now ready to define analogues of the arithmetic functions $\sigma_k$. 
\begin{definition} \label{Def2.1}
Let $d\in K$, and let $n\in \mathbb{Z}$. 
Define the function 
\newline $\delta_n\colon\mathcal O_{\mathbb{Q}(\sqrt{d})}\backslash\{0\}\rightarrow [1,\infty)$ by 
\[\delta_n (z)=\sum_{\substack{x\vert z\\x\in A(d)}}\vert x \vert^n.\]
\end{definition}
\begin{remark} \label{Rem2.1}
We note that, for each $x$ in the summation in the above definition, we may cavalierly replace $x$ with one of its associates. This is because associated numbers have the same absolute value. In other words, the only reason for the criterion $x\!\in\! A (d)$ in the summation that appears in Definition \ref{Def2.1} is to forbid us from counting associated divisors as distinct terms in the summation, but we may choose to use any of the associated divisors as long as we only choose one. This should not be confused with how we count conjugate divisors (we treat $2+i$ and $2-i$ as distinct divisors of $5$ in $\mathbb{Z}[i]$ because $2+i\not\sim 2-i$).  
\end{remark}

\begin{remark} \label{Rem2.2}
We note that, by choosing different values of $d$, the functions $\delta_n$ change dramatically. For example, $\delta_2(3)=10$ when we work in the ring $\mathcal O_{\mathbb{Q}(\sqrt{-1})}$, but $\delta_2(3)=16$ when we work in the ring $\mathcal O_{\mathbb{Q}(\sqrt{-2})}$. Perhaps it would be more precise to write $\delta_n(z,d)$, but we will omit the latter component for convenience. We note that we will also use this convention with functions such as $I_n$ (which we will define soon). 
\end{remark}
\par 
We will say that a function $f\colon\mathcal O_{\mathbb{Q}(\sqrt{d})}\backslash\{0\}\!\rightarrow\!\mathbb{R}$ is multiplicative if $f(xy)=f(x)f(y)$ whenever $x$ and $y$ are relatively prime (have no nonunit common divisors). 
\begin{theorem} \label{Thm2.1}
Let $d\!\in\! K$, and let $f, g\colon\mathcal O_{\mathbb{Q}(\sqrt{d})}\backslash\{0\}\!\rightarrow\!\mathbb{R}$ be multiplicative functions such that $f(u)=g(u)=1$ for all units $u\in \mathcal O_{\mathbb{Q}(\sqrt{d})}^*$. 
Define \\ 
$F\colon\mathcal O_{\mathbb{Q}(\sqrt{d})}\backslash\{0\}\rightarrow\mathbb{R}$ by 
\[F(z)=\sum_{\substack{x,y\in A(d)\\xy\sim z}}f(x)g(y).\]
Then $F$ is multiplicative. 
\end{theorem}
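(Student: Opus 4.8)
The plan is to imitate the classical proof that a Dirichlet convolution of two multiplicative arithmetic functions is again multiplicative, adapting it to the present setting where divisors are only determined up to associates. By the definition of multiplicativity it suffices to fix relatively prime $z_1, z_2 \in \mathcal{O}_{\mathbb{Q}(\sqrt{d})}\setminus\{0\}$ and prove $F(z_1 z_2) = F(z_1)F(z_2)$.

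First I would record a preliminary observation that simplifies all later bookkeeping: both $f$ and $g$ are constant on associate classes. Indeed, if $u$ is a unit then every common divisor of $u$ and any $x$ divides $u$ and is therefore a unit, so $u$ and $x$ are relatively prime; multiplicativity together with $f(u)=1$ then gives $f(ux)=f(u)f(x)=f(x)$, and likewise for $g$. This is exactly where the hypothesis $f(u)=g(u)=1$ is used, and it lets us evaluate $f$ and $g$ at any chosen associate of the intended argument.

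The heart of the argument is a bijection between the index set of the sum defining $F(z_1 z_2)$ and the product of the index sets for $F(z_1)$ and $F(z_2)$. Given $(x,y)\in A(d)^2$ with $xy\sim z_1 z_2$ (so in particular $x\mid z_1z_2$ and $y\mid z_1z_2$), I would split $x$ and $y$ according to which prime divisors of $z_1 z_2$ they involve by setting $x_i = B(\gcd(x,z_i))$ and $y_i = B(\gcd(y,z_i))$ for $i=1,2$; applying $B$ canonicalizes the greatest common divisor, which is otherwise only defined up to associates. Because $z_1$ and $z_2$ share no prime divisors, unique factorization gives $x\sim x_1 x_2$ and $y\sim y_1 y_2$, and comparing exponents prime by prime (at each prime of $z_1$ the exponents of $x$ and $y$ sum to that of $z_1$, while the primes of $z_2$ contribute nothing) shows $x_1 y_1 \sim z_1$ and $x_2 y_2 \sim z_2$. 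Thus $(x,y)\mapsto\bigl((x_1,y_1),(x_2,y_2)\bigr)$ maps the index set of $F(z_1 z_2)$ into the product, with candidate inverse $\bigl((x_1,y_1),(x_2,y_2)\bigr)\mapsto (B(x_1 x_2),B(y_1 y_2))$; unique factorization and the disjointness of the prime supports of $z_1$ and $z_2$ show these maps are mutually inverse.

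Finally I would verify that the bijection respects the summand. Since $x_1$ and $x_2$ are supported on disjoint sets of primes they are relatively prime, so associate-invariance and multiplicativity give $f(x)=f(x_1 x_2)=f(x_1)f(x_2)$, and similarly $g(y)=g(y_1)g(y_2)$. Hence $f(x)g(y)=\bigl[f(x_1)g(y_1)\bigr]\bigl[f(x_2)g(y_2)\bigr]$, and summing over the bijection factors the single sum into the product of the two sums defining $F(z_1)$ and $F(z_2)$, giving $F(z_1 z_2)=F(z_1)F(z_2)$. I expect the main obstacle to be bookkeeping rather than conceptual difficulty: one must set up the gcd-based splitting so that it genuinely lands in $A(d)$ and is well-defined up to associates, and then confirm at the level of prime exponents that the two maps invert one another — the step where relative primality of $z_1, z_2$ and unique factorization are both indispensable.
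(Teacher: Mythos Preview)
Your proposal is correct and follows essentially the same approach as the paper: both set up the standard Dirichlet-convolution bijection between divisor pairs of $z_1z_2$ and pairs of divisor pairs of $z_1$ and $z_2$, using unique factorization and the coprimality of $z_1,z_2$ to split, and using associate-invariance of $f,g$ to match summands. The only difference is bookkeeping---you canonicalize all four pieces via $B$ from the outset, whereas the paper forces only $x_1,y_1\in A(d)$ and canonicalizes $x_2,y_2$ at the end---and you make the associate-invariance of $f,g$ explicit where the paper uses it tacitly.
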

\begin{proof}
Suppose $z_1, z_2\in\mathcal O_{\mathbb{Q}(\sqrt{d})}\backslash\{0\}$ and $\gcd (z_1, z_2)=1$. For any $x, y\in A(d)$ satisfying $xy\sim z_1z_2$, we may write $x=x_1x_2$, $y=y_1y_2$ so that $x_1y_1\sim z_1$ and $x_2y_2\sim z_2$. To make the choice of $x_1$, $x_2$, $y_1$, $y_2$ unique, we require $x_1, y_1\in A(d)$. Conversely, if we choose $x_1, x_2, y_1, y_2\in\mathcal O_{\mathbb{Q}(\sqrt{d})}\backslash\{0\}$ such that $x_1, y_1\!\in\! A(d)$, $x_1y_1\sim z_1$, $x_2y_2\sim z_2$, and $x_1x_2, y_1y_2\!\in\! A(d)$, then we may write $x=x_1x_2$ and $y=y_1y_2$ so that $xy\sim z_1z_2$. To simplify notation, write $B(x_2)=x_3$, $B(y_2)=y_3$, and let $C$ be the set of all ordered quadruples $(x_1, x_2, y_1, y_2)$ such that $x_1, x_2, y_1, y_2\in\mathcal O_{\mathbb{Q}(\sqrt{d})}\backslash\{0\}$, $x_1, y_1\in A(d)$, $x_1y_1\sim z_1$, $x_2y_2\sim z_2$, and $x_1x_2, y_1y_2\in A(d)$. We have established a bijection between $C$ and the set of ordered pairs $(x,y)$ satisfying $x, y\in A(d)$ and  $xy\sim z_1z_2$. Therefore, 
\[F(z_1z_2)=\sum_{\substack{x, y\in A(d)\\xy\sim z_1z_2}}f(x)g(y)=\sum_{(x_1, x_2, y_1, y_2)\in C}f(x_1x_2)g(y_1y_2)\]
\[=\sum_{(x_1, x_2, y_1, y_2)\in C}f(x_1)f(x_2)g(y_1)g(y_2)\]
\[=\sum_{(x_1, x_2, y_1, y_2)\in C}f(x_1)f(B(x_2))g(y_1)g(B(y_2))\]
\[=\sum_{\substack{x_1, y_1\in A(d)\\x_1y_1\sim z_1}}f(x_1)g(y_1)\sum_{\substack{x_3, y_3\in A(d)\\x_3y_3\sim z_2}}f(x_3)g(y_3)=F(z_1)F(z_2).\]
\end{proof}
\begin{corollary} \label{Cor2.1}
For any integer $n$, $\delta_n$ is multiplicative.
\end{corollary}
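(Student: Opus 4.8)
The plan is to recognize $\delta_n$ as a special case of the function $F$ in Theorem \ref{Thm2.1} and then invoke that theorem directly. Specifically, I would set $f(x)=\vert x\vert^n$ and $g(y)=1$ for all $x,y\in\mathcal O_{\mathbb{Q}(\sqrt{d})}\backslash\{0\}$, and form the associated function $F(z)=\sum_{x,y\in A(d),\,xy\sim z}f(x)g(y)$. The whole argument then reduces to two verifications: that $f$ and $g$ satisfy the hypotheses of Theorem \ref{Thm2.1}, and that the resulting $F$ is literally $\delta_n$.

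First I would check the hypotheses. The function $g$ is the constant $1$, which is trivially multiplicative and equal to $1$ on units. For $f$, multiplicativity of the norm gives $N(xy)=N(x)N(y)$, so $\vert xy\vert=\sqrt{N(xy)}=\sqrt{N(x)N(y)}=\vert x\vert\,\vert y\vert$, whence $f(xy)=\vert xy\vert^n=\vert x\vert^n\vert y\vert^n=f(x)f(y)$; in fact this holds for all $x,y$, not merely coprime ones, so $f$ is multiplicative. Moreover every unit $u$ has $N(u)=1$, so $\vert u\vert=1$ and $f(u)=1$. Thus both $f$ and $g$ meet the requirements of Theorem \ref{Thm2.1}, and $F$ is multiplicative.

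It remains to show $F=\delta_n$, and this is the one step that deserves care. I would establish a bijection between the divisors $x\in A(d)$ of $z$ and the ordered pairs $(x,y)$ with $x,y\in A(d)$ and $xy\sim z$. Given such a divisor $x$, write $z=xc$; since $c$ has a unique associate $B(c)\in A(d)$, setting $y=B(c)$ yields $xy\sim xc=z$, and $y$ is forced uniquely because $xy'\sim z=xc$ together with $x\neq 0$ forces $y'\sim c$, hence $y'=B(c)$ by uniqueness of the $A(d)$-representative. Conversely, any pair $(x,y)$ with $xy\sim z$ has $x\mid z$ and $x\in A(d)$, recovering the divisor. Under this bijection the summand $f(x)g(y)=\vert x\vert^n$ depends only on $x$, so
\[
F(z)=\sum_{\substack{x,y\in A(d)\\xy\sim z}}\vert x\vert^n=\sum_{\substack{x\vert z\\x\in A(d)}}\vert x\vert^n=\delta_n(z).
\]

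The main obstacle, such as it is, lies entirely in this last identification: one must be scrupulous that each divisor of $z$ in $A(d)$ corresponds to exactly one admissible pair, so that no divisor is double-counted or omitted when passing between the two summations (this is precisely the role of $A(d)$ and the map $B$ highlighted in Remark \ref{Rem2.1}). Once $F=\delta_n$ is confirmed, the multiplicativity of $\delta_n$ is immediate from Theorem \ref{Thm2.1}, completing the proof.
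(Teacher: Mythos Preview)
Your proposal is correct and follows essentially the same route as the paper: set $f(x)=\vert x\vert^n$ and $g(y)=1$, verify the hypotheses of Theorem~\ref{Thm2.1}, and identify the resulting $F$ with $\delta_n$. The paper compresses the identification $F=\delta_n$ into the word ``immediately,'' whereas you spell out the bijection between divisors $x\in A(d)$ of $z$ and admissible pairs $(x,y)$; this extra care is welcome but does not change the underlying argument.
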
 
\begin{proof}
Noting that $\delta_n(w_1)=\delta_n(w_2)$ whenever $w_1\sim w_2$, we may let
\newline $f, g\colon\mathcal O_{\mathbb{Q}(\sqrt{d})}\backslash\{0\}\rightarrow\mathbb{R}$ be the functions defined by $f(z)=\vert z\vert ^n$ and $g(z)=1$ for all $z\in\mathcal O_{\mathbb{Q}(\sqrt{d})}\backslash\{0\}$. Then the desired result follows immediately from Theorem \ref{Thm2.1}.  
\end{proof} 
\begin{definition} \label{Def2.2}
For each positive integer $n$, define the function \\
$I_n\colon\mathcal O_{\mathbb{Q}(\sqrt{d})}\backslash\{0\}\rightarrow[1,\infty)$ by $\displaystyle{I_n(z)=\frac{\delta_n(z)}{\vert z\vert ^n}}$.  
We say that two or more numbers $z_1, z_2, \ldots, z_r\in\mathcal O_{\mathbb{Q}(\sqrt{d})}\backslash\{0\}$ are \textit{$n$-powerfully friendly} (or \textit{$n$-powerful friends}) \textit{in $\mathcal O_{\mathbb{Q}(\sqrt{d})}$} if $I_n(z_j)=I_n(z_k)$ and $\vert z_j\vert\neq\vert z_k\vert$ for all distinct $j, k\in \{1, 2, \ldots, r\}$. 
Any $z\in\mathcal O_{\mathbb{Q}(\sqrt{d})}\backslash\{0\}$ that has no $n$-powerful friends in $\mathcal O_{\mathbb{Q}(\sqrt{d})}$ is said to be \textit{$n$-powerfully solitary in $\mathcal O_{\mathbb{Q}(\sqrt{d})}$}. 
\end{definition}
\begin{remark} \label{Rem2.3}
Whenever $n=1$, we will omit the adjective ``$1$-powerfully" in the preceding definitions.   
\end{remark}
As an example, we will let $d=-1$ so that $\mathcal O_{\mathbb{Q}(\sqrt{d})}=\mathbb{Z}[i]$. Let us compute $I_2(9+3i)$. We have $9+3i=3(1+i)(2-i)$, so $\delta_2(9+3i)=N(1)+N(3)+N(1+i)+N(2-i)+N(3(1+i))+N(3(2-i))+N((1+i)(2-i))+N(3(1+i)(2-i))=1+9+2+5+18+45+10+90=180$. Then $\displaystyle{I_2(9+3i)=\frac{180}{N(3(1+i)(2-i))}=2}$. Although $I_2(3+9i)$ is also equal to $2$, $3+9i$ and $9+3i$ are not $2$-powerful friends in $\mathbb{Z}[i]$ because $\vert 3+9i\vert=\vert 9+3i\vert$.
We now establish some important properties of the functions $I_n$.  
\begin{theorem} \label{Thm2.2}
Let $n\!\in\!\mathbb{N}$, $d\!\in\! K$, and $z_1, z_2, \pi\in\mathcal O_{\mathbb{Q}(\sqrt{d})}\backslash\{0\}$ with $\pi$ a prime. Then, if we are working in the ring $\mathcal O_{\mathbb{Q}(\sqrt{d})}$, the following statements are true. 
\begin{enumerate}[(a)] 
\item The range of $I_n$ is a subset of the interval $[1,\infty)$, and $I_n(z_1)=1$ if and only if $z_1$ is a unit in $\mathcal O_{\mathbb{Q}(\sqrt{d})}$. If $n$ is even, then $I_n(z_1)\in\mathbb{Q}$. 
\item $I_n$ is multiplicative.  
\item $I_n(z_1)=\delta_{-n}(z_1)$. 
\item If $z_1\vert z_2$, then $I_n(z_1)\leq I_n(z_2)$, with equality if and only if $z_1\sim z_2$. 
\item If $z_1\sim \pi ^k$ for a nonnegative integer $k$, then $z_1$ is $n$-powerfully solitary in $\mathcal O_{\mathbb{Q}(\sqrt{d})}$. 
\end{enumerate}  	
\end{theorem}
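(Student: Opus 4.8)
The plan is to prove solitariness directly: I will show that every $w\in\mathcal O_{\mathbb Q(\sqrt d)}\setminus\{0\}$ with $I_n(w)=I_n(\pi^k)$ satisfies $|w|=|\pi^k|$, so that $\pi^k$ can have no $n$-powerful friend. Since $I_n$ and $|\cdot|$ are constant on associate classes, I may assume $z_1=\pi^k$; the case $k=0$ is immediate from part (a), because then $I_n(w)=1$ forces $w$ to be a unit and hence $|w|=1=|\pi^0|$. So assume $k\ge 1$ and write $t=|\pi|^n>1$. By part (c) together with the geometric series, $I_n(\pi^k)=\delta_{-n}(\pi^k)=\sum_{j=0}^k t^{-j}$, and more generally $I_n(\rho^a)=\sum_{j=0}^a|\rho|^{-nj}$ for every prime $\rho$. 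Factoring $w=u\prod_i\pi_i^{a_i}$ and applying part (b) gives $I_n(w)=\prod_i I_n(\pi_i^{a_i})$. Let $S$ be the set of non-associate primes $\rho$ with $|\rho|=|\pi|$; by Fact 1.2 this is $\{\pi\}$ when $\pi$ is inert or ramified and $\{\pi,\overline\pi\}$ when $\pi$ splits. Writing $v_\rho(w)$ for the exponent of $\rho$ in $w$ and $m=\sum_{\rho\in S}v_\rho(w)$, a direct expansion shows $\prod_{\rho\in S}I_n(\rho^{v_\rho(w)})\ge\sum_{j=0}^m t^{-j}$ (each $t^{-l}$ with $0\le l\le m$ appears with coefficient at least one), while each factor from a prime outside $S$ exceeds $1$. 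Hence $I_n(w)\ge\sum_{j=0}^m t^{-j}$, and comparing with $I_n(\pi^k)=\sum_{j=0}^k t^{-j}$ forces $m\le k$.

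The substance is the reverse inequality $m\ge k$, which I would obtain by a valuation computation. Let $p_0$ be the rational prime below $\pi$, so that $N(\pi)\in\{p_0,p_0^2\}$, and let $v$ denote the valuation of $\mathbb Q(\sqrt{p_0})$ at its prime above $p_0$, normalized so that $v(|\pi|^n)=n$; this is consistent because $p_0$ ramifies in $\mathbb Q(\sqrt{p_0})$ (and when $\pi$ is inert one may simply take $v=v_{p_0}$ on $\mathbb Q$). Since the exponents $-nj$ are pairwise distinct, the ultrametric inequality gives $v\!\left(I_n(\pi^k)\right)=-nk$. On the other hand, every $\rho\in S$ has $|\rho|=|\pi|$, so $v\!\left(I_n(\rho^{a})\right)=-na$ by the same ultrametric argument, whereas factors coming from inert primes are rational numbers prime to $p_0$ and contribute $0$ to $v$. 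Granting (see below) that these are the only kinds of factors that occur, I get $v\!\left(I_n(w)\right)=-nm$, and equating valuations gives $m=k$. Feeding $m=k$ back into the inequality of the first paragraph turns it into an equality, which forces $w$ to have no prime factor outside $S$ and, in the split case, forces the whole exponent onto a single prime, because $\bigl(\sum_{j=0}^a t^{-j}\bigr)\bigl(\sum_{j=0}^b t^{-j}\bigr)$ strictly exceeds $\sum_{j=0}^{a+b}t^{-j}$ as soon as $a,b\ge1$. Therefore $w\sim\pi^k$, or $w\sim\overline\pi^k$ in the split case, and in every case $|w|=|\pi|^k=|\pi^k|$, so $w$ is not an $n$-powerful friend of $\pi^k$.

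The step I expect to be the main obstacle is the clause ``these are the only kinds of factors that occur,'' i.e. justifying that the valuation computation is legitimate. When $n$ is even, part (a) guarantees that all the values $I_n(\rho^a)$ are rational, one may work entirely in $\mathbb Q$ with $v=v_{p_0}$, and every factor from a prime outside $S$ is a rational number prime to $p_0$ and so contributes $0$; there is nothing delicate. The difficulty is confined to odd $n$ with $\pi$ ramified or split, where $I_n(\pi^k)$ is genuinely irrational, lying in $\mathbb Q(\sqrt{p_0})\setminus\mathbb Q$: a priori $w$ might involve a ramified or split prime lying over some other rational prime $p'$, whose factor lives in $\mathbb Q(\sqrt{p'})$ and escapes $\mathbb Q(\sqrt{p_0})$. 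To exclude this I would invoke the linear independence of $\{\sqrt{p'}\}$ over distinct rational primes: since $I_n(w)=I_n(\pi^k)\in\mathbb Q(\sqrt{p_0})$, no foreign $\sqrt{p'}$ can survive in the product, so every ramified or split prime dividing $w$ must lie over $p_0$ and hence belong to $S$. Once this bookkeeping is settled, all surviving factors lie in $\mathbb Q(\sqrt{p_0})$, the valuation is defined term by term, and the computation above goes through; the rest is exactly the geometric-series manipulation that shows prime powers are solitary in $\mathbb Z$.
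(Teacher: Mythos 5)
Your architecture is genuinely different from the paper's, and more unified if it can be completed: the paper proves part (e) for even $n$ by an integrality argument inside the proof of Theorem 2.2 and then spends all of Section 3 (the ``$\sqrt{p}$ part'' Lemmas 3.1--3.4, the exponent equation of Lemma 3.5, and the parity case analysis of Theorems 3.1--3.2) on odd $n$, whereas your two-sided bound ($m\le k$ from the product estimate, $m\ge k$ from valuations) treats all $n$ at once. However, two of your steps are false or unproven as written. The smaller one: factors from inert primes are \emph{not} ``rational numbers prime to $p_0$.'' We have $I_n(q^a)=\bigl(\sum_{j=0}^a q^{jn}\bigr)/q^{an}$, and $p_0$ can divide the numerator; e.g., in $\mathbb{Z}[i]$ with $\pi=2+i$, $p_0=5$, the inert prime $q=3$ gives $I_1(3^3)=40/27$ and $I_2(3)=10/9$, both with positive $5$-adic valuation. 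So you cannot assert $v(I_n(w))=-nm$ and ``equate valuations.'' This is repairable in one line: such factors have valuation $\ge 0$ (their denominators are powers of $q\ne p_0$), so $-nk=v(I_n(w))\ge-nm$ still yields $m\ge k$, which combined with your first-paragraph bound $m\le k$ gives $m=k$; but you must phrase the step as an inequality, not an equality.

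The genuine gap is the foreign-prime exclusion, which you flag but do not actually prove. Linear independence of square roots gives you one direction: since $I_n(w)=I_n(\pi^k)\in\mathbb{Q}(\sqrt{p_0})$, every coefficient of a foreign $\sqrt{p'}$ in $I_n(w)$ vanishes. What you need is the converse implication: that a ramified or split prime over $p'\ne p_0$ dividing $w$ \emph{forces} a nonzero $\sqrt{p'}$ coefficient in the product $\prod_i I_n(\pi_i^{a_i})$. That is not automatic --- cancellation in products of quadratic irrationalities is perfectly possible in general, as $(1+\sqrt{2})(1-\sqrt{2})=-1$ shows --- and it is precisely the content of the paper's Lemmas 3.1--3.4, which take real work. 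The fix available in your setup is positivity: group the factors of $I_n(w)$ by the rational prime below, so the block over each foreign $p'$ has the form $x+y\sqrt{p'}$ with $x,y>0$ (for ramified $\rho$ this is the geometric sum; for split $\rho$ expand $(u_1+u_2\sqrt{p'})(u_3+u_4\sqrt{p'})$ with all $u_i\ge 0$, $u_1,u_3\ge 1$, and at least one of $u_2,u_4$ positive), the block over $p_0$ is $x_0+y_0\sqrt{p_0}$ with $x_0>0$, and the inert part is a positive rational $R$. Expanding the product of blocks, the coefficient of $\sqrt{p_1'\cdots p_s'}$ (the radicand running over all foreign primes present) equals $Rx_0\prod_i y_i>0$, and $p_1'\cdots p_s'$ is square-free and distinct from $1$ and $p_0$; by uniqueness of coefficients this contradicts $I_n(w)=t_1+t_2\sqrt{p_0}$ unless $s=0$. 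With that lemma written out and the valuation step corrected, your proof closes and stands as a legitimate, shorter alternative to the paper's.
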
 
\begin{proof}
The first sentence in part $(a)$ is fairly clear, and the second sentence becomes equally clear if one uses the fact that $\vert z_1\vert^n\!\!\in\mathbb{N}$ whenever $n$ is even. To prove part $(b)$, suppose that $z_1$ and $z_2$ are relatively prime elements of $\mathcal O_{\mathbb{Q}(\sqrt{d})}$. Then, by Corollary \ref{Cor2.1}, $\displaystyle{I_n(z_1z_2)=\frac{\delta_n(z_1z_2)}{\vert z_1z_2\vert ^n}=\frac{\delta_n(z_1)\delta_n(z_2)}{\vert z_1\vert ^n\vert z_2\vert ^n}=I_n(z_1)I_n(z_2)}$. In order to prove part $(c)$, it suffices, due to the truth of part $(b)$, to prove that $I_n(\pi ^\alpha)=\delta_{-n}(\pi ^\alpha)$ for any prime $\pi$ and nonnegative integer $\alpha$. To do so is fairly routine, as
\[I_n(\pi ^\alpha)=\frac{\delta_n(\pi ^\alpha)}{\vert\pi ^\alpha\vert^n} =\frac{\sum_{j=0}^\alpha\vert \pi ^j\vert ^n}{\vert \pi ^\alpha\vert ^n}=\sum_{j=0}^\alpha\vert \pi ^{j-\alpha}\vert ^n\] \[=\sum_{j=0}^\alpha\vert\pi ^{\alpha-j}\vert ^{-n}=\sum_{l=0}^\alpha\vert \pi ^l\vert ^{-n}=\delta_{-n}(\pi ^\alpha).\]
The truth of statement $(d)$ follows from part $(c)$ because, if $z_1\vert z_2$, then  
\[I_n(z_2)=\delta_{-n}(z_2)=\sum_{\substack{x\vert z_2\\x\in A(d)}}\vert x\vert ^{-n}\]
\[=\sum_{\substack{x\vert z_1\\x\in A(d)}}\vert x\vert ^{-n}+\sum_{\substack{x\vert z_2\\x\nmid z_1\\x\in A(d)}}\vert x\vert ^{-n}=I_n(z_1)+\sum_{\substack{x\vert z_2\\x\nmid z_1\\x\in A(d)}}\vert x\vert ^{-n}.\]
\par 
Finally, for part $(e)$, we provide a proof for the case when $n$ is even. We postpone the proof for the case in which $n$ is odd until the next section. Let $\pi$ be a prime in $\mathcal O_{\mathbb{Q}(\sqrt{d})}$, and suppose that $z_1\sim \pi ^k$  for a nonnegative integer $k$. If $k=0$, then $z_1$ is a unit and the result follows from part $(a)$. Therefore, assume $k>0$. Assume, for the sake of finding a contradiction, that $I_n(z_1)=I_n(z_2)$ and $\vert z_1\vert\neq\vert z_2\vert$ for some $z_2\in\mathcal O_{\mathbb{Q}(\sqrt{d})}\backslash\{0\}$. Under this assumption, we have $\vert z_2\vert ^n\delta_n(z_1)=\vert z_1\vert ^n\delta_n(z_2)$. Either $N(\pi)=p$ is an integer prime or $N(\pi)=q^2$, where $q$ is an integer prime.
\par 
First, suppose $N(\pi)=p$ is an integer prime. Then the statement 
\newline $\vert z_2\vert ^n\delta_n(z_1)=\vert z_1\vert ^n\delta_n(z_2)$ is equivalent to $N(z_2)^{n/2}\delta_n(\pi ^k)=p^{kn/2}\delta_n(z_2)$. Noting that $N(z_2)^{n/2}$, $\delta_n(\pi ^k)$, and $\delta_n(z_2)$ are integers (because $n$ is even) and that $p\nmid\delta_n(\pi ^k)=1+p^{n/2}+\cdots+p^{kn/2}$ in $\mathbb{Z}$, we find $p^{kn/2}\vert N(z_2)^{n/2}$ in $\mathbb{Z}$. This implies that $p^k\vert N(z_2)$ in $\mathbb{Z}$, and we conclude that there exist nonnegative integers $t_1, t_2$ satisfying $\pi ^{t_1}\overline{\pi}^{t_2}\vert z_2$ and $t_1+t_2=k$. If $\pi\sim\overline{\pi}$, then we have $\pi ^k\vert z_2$, from which part $(d)$ yields the desired contradiction. Otherwise, $\pi$ and $\overline{\pi}$ are relatively prime, so we may use parts $(b)$ and $(d)$ to write
\[I_n(z_2)\geq I_n(\pi ^{t_1})I_n(\overline{\pi}^{t_2})=\frac{1+p^{n/2}+\cdots+p^{t_1n/2}}{p^{t_1n/2}}\frac{1+p^{n/2}+\cdots+p^{t_2n/2}}
{p^{t_2n/2}}\]
\[=\frac{(1+p^{n/2}+\cdots+p^{t_1n/2})(1+p^{n/2}+\cdots+p^{t_2n/2} )}{p^{kn/2}}\]
\[\geq \frac{1+p^{n/2}+\cdots+p^{kn/2}}{p^{kn/2}}=I_n(\pi ^k)=I_n(z_2).\]
This implies that $I_n(z_2)=I_n(\pi ^{t_1}\overline{\pi}^{t_2})$, from which part $(d)$ tells us that 
$z_2\sim \pi ^{t_1}\overline{\pi}^{t_2}$. Therefore, $\vert z_2\vert=\vert\pi ^{t_1}\overline{\pi}^{t_2}\vert=\sqrt{p}^{t_1+t_2}=\sqrt{p}^k=\vert\pi^k\vert=\vert z_1\vert$, which we assumed was false. 
\par 
Now, suppose that $N(\pi)=q^2$, where $q$ is an integer prime ($q$ is inert). Then the statement $\vert z_2\vert ^n\delta_n(z_1)=\vert z_1 \vert ^n\delta_n(z_2)$ is equivalent to 
\newline $N(z_2)^{n/2}\delta_n(\pi ^k)=q^{kn}
\delta_n(z_2)$. As before, $N(z_2)^{n/2}$, $\delta_n(\pi ^k)$, and $\delta_n(z_2)$ are integers, and $q\nmid \delta_n(\pi ^k)=1+q^n+\cdots+q^{kn}$ in $\mathbb{Z}$. Therefore, $q^{kn}\vert N(z_2)^{n/2}$ in $\mathbb{Z}$, so $q^{2k}\vert N(z_2)$ in $\mathbb{Z}$. As $q$ is inert, this implies that $q^k\vert z_2$, so $z_1\vert z_2$ (note that $z_1\sim \pi ^k\sim q^k$). Therefore, part $(d)$ provides the final contradiction, and the proof is complete. 
\end{proof}
It is much easier to deal with the functions $I_n$ when $n$ is even than when $n$ is odd because, when $n$ is even, the values of $\delta_n(z)$ and $\vert z\vert ^n$ are positive integers. Therefore, we will devote the next section to developing an understanding of the functions $I_n$ for odd values of $n$. 
\section{When $n$ is Odd}
We begin by establishing some definitions and lemmata that will later prove themselves useful. Let $W$ be the set of all square-free positive integers, and write $W=\{w_0, w_1, w_2, \ldots\}$ so that $w_0=1$ and $w_i<w_j$ for all nonnegative 
integers $i<j$. Let $F$ be the set of all finite linear combinations of elements of $W$ with rational coefficients. That is, $F=\{a_0+a_1\sqrt{w_1}+\cdots+a_m\sqrt{w_m}: a_0, a_1, \ldots, a_m\!\in\!\mathbb{Q}, m\!\in\!\mathbb{N}_0\}$. For any $r\!\in\! F$, the choice of the rational coefficients is unique. More formally, if 
$a_0+a_1\sqrt{w_1}+\cdots+a_m\sqrt{w_m}=b_0+b_1\sqrt{w_1}+\cdots+b_m\sqrt{w_m}$, where $a_0, a_1, \ldots, a_m, b_0, b_1, \ldots, b_m\in\mathbb{Q}$, then $a_i=b_i$ for all $i\in \{0, 1, \ldots, m\}$ \cite{Klazar09}. Note that $F$ is a subfield of the real numbers. 
\begin{definition} \label{Def3.1}
For $r\!\in\! F$ and $j\in\mathbb{N}_0$, let $C_j(r)$ be the unique rational coefficient of $\sqrt{w_j}$ in the expansion of $r$. That is, the sequence $(C_j(r))_{j=0}^\infty$ is the unique infinite sequence of rational numbers that has finitely many nonzero terms and that satisfies $\displaystyle{r=\sum_{j=0}^\infty C_j(r)\sqrt{w_j}}$. 
\end{definition}
As an example, $\displaystyle{C_5\left(\frac{3}{5}-\sqrt{6}+\frac{1}{3}\sqrt{7}\right)=\frac{1}{3}}$ because $w_5=7$. 
\begin{definition} \label{Def3.2}
Let $p$ be an integer prime. For $r\in F$, we say that \textit{$r$ has a $\sqrt{p}$ part} if there exists some positive integer $j$ such that $C_j(r)\neq 0$ and $p\vert w_j$ (in $\mathbb{Z}$). We say that $r$ does not have a $\sqrt{p}$ part if no such positive integer $j$ exists. 
\end{definition}
For example, if $\displaystyle{r=\frac{1}{2}+3\sqrt{10}}$, then $r$ has a $\sqrt{2}$ part, and $r$ has a $\sqrt{5}$ part. However, $\displaystyle{\frac{1}{2}+3\sqrt{10}}$ does not have a $\sqrt{7}$ part.  
\begin{lemma} \label{Lem3.1}
If $r_1, r_2\in F$ each do not have a $\sqrt{p}$ part for some integer prime $p$, then $r_1r_2$ does not have a $\sqrt{p}$ part.
\end{lemma}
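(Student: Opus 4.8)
The plan is to expand $r_1 r_2$ in the basis $\{\sqrt{w_j}\}$ and track which square-free radicands can receive a nonzero coefficient. Writing $r_1 = \sum_i C_i(r_1)\sqrt{w_i}$ and $r_2 = \sum_j C_j(r_2)\sqrt{w_j}$, we have $r_1 r_2 = \sum_{i,j} C_i(r_1) C_j(r_2)\sqrt{w_i}\sqrt{w_j}$, where only pairs with both coefficients nonzero matter. By hypothesis every such $w_i$ and $w_j$ satisfies $p \nmid w_i$ and $p \nmid w_j$ in $\mathbb{Z}$, so it suffices to show that for each such pair the single product $\sqrt{w_i}\sqrt{w_j} = \sqrt{w_i w_j}$, after being rewritten as a rational multiple of $\sqrt{w_k}$ with $w_k$ square-free, has $p \nmid w_k$.

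To perform this square-free reduction, I would set $g = \gcd(w_i, w_j)$ and write $w_i = ga$, $w_j = gb$. The point is that $g$, $a$, and $b$ are pairwise coprime: since $w_i = ga$ is square-free, $g$ and $a$ share no prime (else its square would divide $w_i$), and likewise for $g, b$; and a common prime of $a, b$ would lie in $g$, again contradicting square-freeness. Hence $w_i w_j = g^2 ab$ with $ab$ square-free, giving $\sqrt{w_i w_j} = g\sqrt{ab}$ and $w_k = ab = w_i w_j / \gcd(w_i, w_j)^2$. The crux is then the observation that a prime $\ell$ divides $w_k = ab$ exactly when $\ell$ divides precisely one of $w_i, w_j$: if $\ell$ divides both then $\ell \mid g$ and $\ell \nmid ab$, whereas if $\ell$ divides only $w_i$ then $\ell \nmid g$ forces $\ell \mid a \mid ab$. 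Taking $\ell = p$ and recalling that $p$ divides neither $w_i$ nor $w_j$, we conclude $p \nmid w_k$.

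Finally, collecting like radicals, $C_k(r_1 r_2)$ is a finite sum of terms $C_i(r_1)C_j(r_2)\gcd(w_i,w_j)$ ranging over contributing pairs whose reduced radicand is $w_k$. By the previous paragraph no such pair exists when $p \mid w_k$, so $C_k(r_1 r_2) = 0$ for every index $k$ with $p \mid w_k$; that is, $r_1 r_2$ has no $\sqrt{p}$ part. I expect the only genuine obstacle to be the coprimality bookkeeping in the square-free reduction, since it is precisely the square-freeness of $w_i$ and $w_j$ — forcing $g$, $a$, $b$ to be pairwise coprime — that lets us pin down which primes can divide the reduced radicand.
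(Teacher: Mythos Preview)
Your proof is correct and follows essentially the same approach as the paper. Both arguments expand $r_1r_2$ and observe that the coefficient of $\sqrt{w_k}$ collects contributions from pairs $(w_i,w_j)$ whose product has square-free part $w_k$; the paper states directly that $p\mid SF(w_{i_1}w_{i_2})$ forces $p\mid w_{i_1}$ or $p\mid w_{i_2}$, while you make the same deduction explicit via the $\gcd$ decomposition $w_iw_j=g^2ab$.
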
 
\begin{proof}
Suppose $p\vert w_j$ for some positive integer $j$. Then, if we let $SF(n)$ denote the square-free part of an integer $n$ and consider the basic algebra used to multiply elements of $F$, we find that
\[C_j(r_1r_2)=\sum_{\substack{i_1, i_2\in\mathbb{N}_0\\SF(w_{i_1}w_{i_2})=w_j}}C_{i_1}(r_1)C_{i_2}(r_2)\sqrt{\frac{w_{i_1}w_{i_2}}{w_j}}.\]
For every pair of nonnegative integers $i_1, i_2$ satisfying $SF(w_{i_1}w_{i_2})=w_j$, either $p\vert w_{i_1}$ or $p\vert w_{i_2}$. This implies that either $C_{i_1}(r_1)=0$ or $C_{i_2}(r_2)=0$ by the hypothesis that each of $r_1$ and $r_2$ does not have a $\sqrt{p}$ part. Thus, $C_j(r_1r_2)=0$. As $w_j$ was an arbitrary square-free positive integer divisible by $p$, we conclude that $r_1r_2$ does not have a $\sqrt{p}$ part. 
\end{proof}
\begin{lemma} \label{Lem3.2}
If each of $r_1, r_2, \ldots, r_l\in F$ does not have a $\sqrt{p}$ part for some integer prime $p$, then $r_1r_2\cdots r_l$ does not have a $\sqrt{p}$ part. 
\end{lemma}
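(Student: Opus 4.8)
The plan is to induct on the number of factors $l$, using Lemma \ref{Lem3.1} as the engine for the inductive step. The base case $l=1$ is immediate, since then the hypothesis and the conclusion coincide, and the case $l=2$ is precisely the content of Lemma \ref{Lem3.1}, so nothing new is required there. This suggests that the entire lemma is simply the iteration of the two-factor result, and the only real task is to set up the induction cleanly.

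For the inductive step, I would suppose the statement holds for some $l-1\geq 1$ and consider elements $r_1, r_2, \ldots, r_l\in F$, each lacking a $\sqrt{p}$ part for the fixed integer prime $p$. Grouping the product as $(r_1 r_2\cdots r_{l-1})\,r_l$, I would first apply the inductive hypothesis to the $l-1$ elements $r_1, \ldots, r_{l-1}$ to conclude that their product $r_1\cdots r_{l-1}$ does not have a $\sqrt{p}$ part. Then, treating $r_1\cdots r_{l-1}$ and $r_l$ as two elements of $F$ that each lack a $\sqrt{p}$ part, I would invoke Lemma \ref{Lem3.1} to deduce that $(r_1\cdots r_{l-1})\,r_l = r_1 r_2\cdots r_l$ likewise lacks a $\sqrt{p}$ part, completing the induction.

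The one point requiring a moment's care is that Lemma \ref{Lem3.1} applies only to elements of $F$, so I must verify that the partial product $r_1\cdots r_{l-1}$ genuinely lies in $F$. This is guaranteed because $F$ is a subfield of $\mathbb{R}$, hence closed under multiplication, so every partial product of elements of $F$ is again an element of $F$ with a well-defined coefficient sequence $(C_j(\cdot))_{j=0}^\infty$. Beyond this bookkeeping, the argument presents no genuine obstacle: all of the combinatorial work that actually controls the square-free parts under multiplication has already been carried out in the proof of Lemma \ref{Lem3.1}, and Lemma \ref{Lem3.2} merely propagates that fact across finitely many factors.
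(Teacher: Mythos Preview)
Your proof is correct and takes essentially the same approach as the paper, which simply states that the result follows from repeated use of Lemma~\ref{Lem3.1}. You have written out the straightforward induction explicitly (including the observation that $F$ is closed under multiplication), but the underlying argument is identical.
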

\begin{proof}
The desired result follows immediately from repeated use of Lemma \ref{Lem3.1}. 
\end{proof}
\begin{lemma} \label{Lem3.3}
If $r_1\in F$ has a $\sqrt{p}$ part and $r_2\in F\backslash\{0\}$ does not have a $\sqrt{p}$ part for some integer prime $p$, then $r_1r_2$ has a $\sqrt{p}$ part. 
\end{lemma}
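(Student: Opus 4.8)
The plan is to split $r_1$ according to which basis vectors $\sqrt{w_j}$ carry a factor of $p$. Write $r_1=s+t$, where $s=\sum_{p\mid w_j}C_j(r_1)\sqrt{w_j}$ collects exactly the terms whose radicand is divisible by $p$, and $t=r_1-s$ collects the remaining terms. The hypothesis that $r_1$ has a $\sqrt{p}$ part says precisely that $s\neq 0$, while $t$ does not have a $\sqrt{p}$ part by construction. Then $r_1r_2=sr_2+tr_2$, and I would analyze the two summands separately, showing that the first is a nonzero element supported entirely on $\sqrt{p}$ terms while the second contributes nothing to any $\sqrt{p}$ term.

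For the second summand, $t$ and $r_2$ each fail to have a $\sqrt{p}$ part, so Lemma \ref{Lem3.1} gives that $tr_2$ does not have a $\sqrt{p}$ part either; that is, $C_j(tr_2)=0$ whenever $p\mid w_j$. For the first summand, I would establish the opposite extreme: every nonzero term of $sr_2$ carries a factor of $p$. Indeed, a nonzero contribution to $C_j(sr_2)$ arises from a product $\sqrt{w_{i_1}}\sqrt{w_{i_2}}$ with $p\mid w_{i_1}$ (coming from $s$) and $p\nmid w_{i_2}$ (coming from $r_2$); since each $w_i$ is square-free, the prime $p$ divides $w_{i_1}w_{i_2}$ to the first power, so $p\mid SF(w_{i_1}w_{i_2})=w_j$. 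Hence $C_j(sr_2)=0$ whenever $p\nmid w_j$.

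It remains to verify $sr_2\neq 0$, and here I would simply invoke that $F$ is a subfield of $\mathbb{R}$, hence an integral domain: since $s\neq 0$ and $r_2\neq 0$, their product $sr_2$ is nonzero. Combining the previous two observations, $sr_2$ is a nonzero element all of whose nonzero coordinates $C_j$ satisfy $p\mid w_j$, so there is some index $j$ with $p\mid w_j$ and $C_j(sr_2)\neq 0$. For this same $j$ we have $C_j(r_1r_2)=C_j(sr_2)+C_j(tr_2)=C_j(sr_2)\neq 0$, which is exactly the statement that $r_1r_2$ has a $\sqrt{p}$ part.

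The only step requiring genuine care is the no-cancellation argument: one must ensure that the $\sqrt{p}$ terms of $sr_2$ are not annihilated by $tr_2$. This is handled cleanly by noting that $sr_2$ and $tr_2$ have disjoint supports among the coordinates $C_j$ — the former lives entirely on indices with $p\mid w_j$ and the latter entirely on indices with $p\nmid w_j$ — so the two cannot interact, and the nonvanishing of $sr_2$ supplied by the integral-domain property then finishes the proof.
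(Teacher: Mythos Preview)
Your proof is correct and follows essentially the same approach as the paper: both decompose $r_1$ into its $\sqrt{p}$-supported part $s$ and its $\sqrt{p}$-free part $t$, handle $tr_2$ via Lemma~\ref{Lem3.1}, and invoke the integral-domain property of $F$ to ensure nonvanishing. The only cosmetic difference is that the paper factors $s=\sqrt{p}\,r_4$ with $r_4$ having no $\sqrt{p}$ part and then applies Lemma~\ref{Lem3.1} to $r_2r_4$, whereas you argue directly that every term of $sr_2$ lands on an index with $p\mid w_j$.
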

\begin{proof}
Write $\displaystyle{r_1=r_3+\sum_{i=1}^ka_i\sqrt{x_i}}$, where $r_3\in F$ does not have a $\sqrt{p}$ part and, for all distinct $i, j\in \{1, 2, \ldots, k\}$, we have $a_i\in\mathbb{Q}\backslash\{0\}$, $x_i\in W$, $p\vert x_i$ in $\mathbb{Z}$, and $x_i\neq x_j$. If we write $\displaystyle{v_i=\frac{x_i}{p}}$ for all $i\in \{1, 2, \ldots, k\}$, then each $v_i$ is a square-free positive integer that is not divisible by $p$. Therefore, \\ 
$\displaystyle{r_1r_2=\left(r_3+\sqrt{p}\sum_{i=1}^ka_i\sqrt{v_i}\right)r_2=r_2r_4\sqrt{p}+r_2r_3}$, where $\displaystyle{r_4=\sum_{i=1}^ka_i\sqrt{v_i}}$. By the hypothesis that $r_1$ has a $\sqrt{p}$ part, $r_4\neq 0$. As each of $r_2, r_4$ is nonzero and does not have a $\sqrt{p}$ part, Lemma \ref{Lem3.1} guarantees that $r_2r_4$ is nonzero and does not have a $\sqrt{p}$ part. Now, it is easy to see that this implies that $\sqrt{p}r_2r_4$ has a $\sqrt{p}$ part. Furthermore, each of $r_2, r_3$ does not have a $\sqrt{p}$ part, so Lemma \ref{Lem3.1} tells us that $r_2r_3$ does not have a $\sqrt{p}$ part. Thus, it is clear that $r_2r_4\sqrt{p}+r_2r_3$ has a $\sqrt{p}$ part, so the proof is complete. 
\end{proof}
\begin{lemma} \label{Lem3.4}
Let us fix $d\in K$ and work in the ring $\mathcal O_{\mathbb{Q}(\sqrt{d})}$. Let $\pi$ be a prime such that $N(\pi)=p$ is an integer prime. If $n$ is an odd positive integer and $\pi\vert z$ for some $z\in\mathcal O_{\mathbb{Q}(\sqrt{d})}\backslash\{0\}$, then $I_n(z)\in F$ and $I_n(z)$ has a $\sqrt{p}$ part. 
\end{lemma}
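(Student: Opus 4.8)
The plan is to first confirm that $I_n(z)$ genuinely lands in $F$ (so that the phrase ``$\sqrt{p}$ part'' is meaningful), and then to isolate the contribution of the norm-$p$ primes using multiplicativity. For the first part I would invoke Theorem \ref{Thm2.2}$(c)$ to write $I_n(z)=\delta_{-n}(z)=\sum_{x\mid z,\,x\in A(d)}\vert x\vert^{-n}$, a finite sum. Writing $SF(m)$ for the square-free part of a positive integer $m$, each summand is $\vert x\vert^{-n}=N(x)^{-n/2}=N(x)^{-(n+1)/2}\sqrt{N(x)}$, which is a rational multiple of $\sqrt{SF(N(x))}\in F$. Since $F$ is a field closed under finite sums, $I_n(z)\in F$. (This argument does not use $\pi\mid z$, so $I_n$ maps into $F$ on all of its domain, which will be convenient for the factors below.)

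For the second part I would factor out the primes of norm $p$. By Fact \ref{Fact1.2}, either $p$ ramifies, so that $\pi\sim\overline{\pi}$ is the only prime (up to associates) of norm $p$, or $p$ splits, so that $\pi\not\sim\overline{\pi}$ are the two primes of norm $p$. Accordingly I would write $z\sim P_0w$, where $P_0=\pi^a$ (ramified) or $P_0=\pi^a\overline{\pi}^{\,b}$ (split) collects exactly the norm-$p$ prime factors of $z$, and $w$ is coprime to both $\pi$ and $\overline{\pi}$; since $\pi\mid z$ we have $a\geq 1$. Multiplicativity (Theorem \ref{Thm2.2}$(b)$) then gives $I_n(z)=I_n(P_0)\,I_n(w)$, and in the split case a further application gives $I_n(P_0)=I_n(\pi^a)\,I_n(\overline{\pi}^{\,b})$.

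The task then reduces to two claims: $I_n(P_0)$ has a $\sqrt{p}$ part, and $I_n(w)$ does not; Lemma \ref{Lem3.3} (with $r_2=I_n(w)\neq 0$, since $I_n\geq 1$) would then finish the proof. For $I_n(P_0)$, using $\vert\pi^j\vert^{-n}=p^{-jn/2}$, I would write $I_n(P_0)=\sum_s c_s\,p^{-sn/2}$ with positive integer coefficients $c_s$ (in the split case $c_s$ counts the pairs $(j,k)$ with $j+k=s$). Because $n$ is odd, a term $p^{-sn/2}$ is a positive rational multiple of $\sqrt{p}$ exactly when $s$ is odd and is rational when $s$ is even, and no other square roots occur since $p$ is prime. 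Hence the coefficient of $\sqrt{p}$ in $I_n(P_0)$ is a sum of positive terms, and since $a\geq 1$ forces the $s=1$ term to appear, this coefficient is strictly positive; thus $I_n(P_0)$ has a $\sqrt{p}$ part. For $I_n(w)$, I would expand it over prime-power factors $I_n(\rho^e)$ with $N(\rho)\neq p$: each such factor is either rational (when $N(\rho)=q^2$ for an inert prime $q$) or involves only $\sqrt{p'}$ for an integer prime $p'\neq p$, so none has a $\sqrt{p}$ part, and Lemma \ref{Lem3.2} shows the product $I_n(w)$ has none either.

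The main obstacle I anticipate is the split case, where $\pi$ and $\overline{\pi}$ each contribute a factor carrying $\sqrt{p}$, so one must rule out cancellation when the two are multiplied. The resolution is exactly the positivity observation above: every term $c_s p^{-sn/2}$ is positive, so the $\sqrt{p}$-coefficient of $I_n(P_0)$ cannot vanish. Bundling all norm-$p$ contributions into the single factor $I_n(P_0)$, rather than applying Lemma \ref{Lem3.3} to $\pi$ and $\overline{\pi}$ separately, is what keeps the combining step clean.
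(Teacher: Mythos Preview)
Your proof is correct and follows essentially the same approach as the paper's: factor out the norm-$p$ contribution, show it has a $\sqrt{p}$ part by positivity of the coefficients, show the remaining factor has no $\sqrt{p}$ part via Lemma \ref{Lem3.2}, and combine with Lemma \ref{Lem3.3}. The only cosmetic difference is that you treat the ramified and split cases uniformly via the single sum $\sum_s c_s\,p^{-sn/2}$, whereas the paper handles them separately (writing $(u_1+u_2\sqrt{p})(u_3+u_4\sqrt{p})$ in the split case and observing $u_1u_4+u_2u_3>0$).
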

\begin{proof}
It is clear that $I_n(z)\in F$ (this is also true for positive even integer values of $n$). 
Write $\displaystyle{z\sim \pi ^\alpha\overline{\pi}^\beta\prod_{j=1}^r\pi_j^{\alpha_j}}$, where, for all distinct $j, k\in\{1, 2, \ldots, r\}$, $\pi_j$ is prime, $N(\pi_j)\neq p$, $\alpha_j$ is a positive integer, 
and $\pi_j\not\sim\pi_k$. Fix some $j\in\{1, 2, \ldots, r\}$. If $\pi_j$ is associated to an inert integer prime, then $I_n(\pi_j^{\alpha_j})\in\mathbb{Q}$, so $I_n(\pi_j^{\alpha_j})$ does not have a $\sqrt{p}$ part. If $N(\pi_j)=p_0$ for some 
integer prime $p_0$, then $I_n(\pi_j^{\alpha_j})=a+b\sqrt{p_0}$ for some $a, b\in\mathbb{Q}$. Again, we conclude that $I_n(\pi_j^{\alpha_j})$ does not have a $\sqrt{p}$ part because $p_0\neq p$. Writing $\displaystyle{x=\prod_{j=1}^r\pi_j^{\alpha_j}}$, Lemma \ref{Lem3.2} and the multiplicativity of $I_n$ guarantee that $I_n(x)$ does not have a $\sqrt{p}$ part. We now consider two cases. 
\par 
First, consider the case in which $p$ ramifies in $\mathcal O_{\mathbb{Q}(\sqrt{d})}$ (meaning $\pi\sim \overline{\pi}$). Then $z\sim \pi^{\alpha+\beta}x$. Using part $(c)$ of Theorem \ref{Thm2.2}, we have $I_n(\pi^{\alpha+\beta})=\delta_{-n}(\pi^{\alpha+\beta})=$ \\ 
$\displaystyle{\sum_{m=0}^{\alpha+\beta}\frac{1}{\vert\pi^m\vert ^n}=\sum_{m=0}^{\alpha+\beta}\frac{1}{\sqrt{p}^{mn}}=t_1+t_2\sqrt{p}}$, where $t_1$ and $t_2$ are positive rational numbers. Thus, $I_n(\pi^{\alpha+\beta})$ has a $\sqrt{p}$ part, so Lemma \ref{Lem3.3} guarantees that $I_n(z)$ has a $\sqrt{p}$ part. 
\par 
Next, consider the case in which $p$ splits in $\mathcal O_{\mathbb{Q}(\sqrt{d})}$ (meaning $\pi\not\sim\overline{\pi}$). Then we have $\displaystyle{I_n(\pi^\alpha\overline{\pi}^\beta)=\delta_{-n}(\pi^\alpha)\delta_{-n}(\overline{\pi}^\beta)=\left(\sum_{m=0}^\alpha\frac{1}{\vert \pi^m\vert ^n}\right)\left(\sum_{m=0}^\beta\frac{1}{\vert \overline{\pi}^m\vert ^n}\right)=}$ \\
$\displaystyle{\left(\sum_{m=0}^\alpha\frac{1}{\sqrt{p}^{mn}}\right)\left(\sum_{m=0}^\beta\frac{1}{\sqrt{p}^{mn}}\right)=(u_1+u_2\sqrt{p})(u_3+u_4\sqrt{p})}$, where $u_1, u_2, u_3, u_4$ \\ 
are positive rational numbers. Then $(u_1+u_2\sqrt{p})(u_3+u_4\sqrt{p})=u_1u_3+pu_2u_4+(u_1u_4+u_2u_3)\sqrt{p}$. As $u_1u_4+u_2u_3>0$, $I_n(\pi^\alpha\overline{\pi}^\beta)$ has a $\sqrt{p}$ part. Once again, Lemma \ref{Lem3.3} guarantees that $I_n(z)$ has a $\sqrt{p}$ part. 
\end{proof}
\begin{lemma} \label{Lem3.5}
Let $p$ be an integer prime, and let $m_1, m_2, \beta_1, \beta_2$ be nonnegative integers satisfying $(p^{m_1}+p^{m_2})(p^{\beta_1+\beta_2+1}+1)=(p^{\beta_1}+p^{\beta_2})(p^{m_1+m_2+1}+1)$. Then either $m_1=\beta_1$ and $m_2=\beta_2$ or $m_1=\beta_2$ and $m_2=\beta_1$.
\end{lemma}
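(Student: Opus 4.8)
The plan is to divide through by the strictly positive quantity $(1+p^{m_1+m_2+1})(1+p^{\beta_1+\beta_2+1})$, which recasts the hypothesis as $h(m_1,m_2)=h(\beta_1,\beta_2)$ for the symmetric function $h(a,b)=\frac{p^a+p^b}{1+p^{a+b+1}}$. Since $h$ is symmetric in its two arguments and the desired conclusion is exactly that the unordered pairs $\{m_1,m_2\}$ and $\{\beta_1,\beta_2\}$ coincide, I may assume without loss of generality that $m_1\le m_2$ and $\beta_1\le\beta_2$, so that it suffices to prove that $h$ is injective on such ordered pairs.

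First I would record a strict monotonicity property. For integers $a_1<a_2$ and $b\ge 0$, a direct cross-multiplication shows that the difference $(p^{a_1}+p^b)(1+p^{a_2+b+1})-(p^{a_2}+p^b)(1+p^{a_1+b+1})$ collapses to $(p^{a_1}-p^{a_2})(1-p^{2b+1})$. As $p\ge 2$ and $b\ge 0$, both factors are negative, so the product is positive; dividing back by the positive denominators gives $h(a_1,b)>h(a_2,b)$. Thus $h$ is strictly decreasing in its first argument and, by symmetry, in its second. This computation is entirely elementary and forms the easy part of the argument.

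Next I would assume, for contradiction, that $\{m_1,m_2\}\neq\{\beta_1,\beta_2\}$. Strict monotonicity in each coordinate rules out the possibility that one ordered pair dominates the other coordinatewise, since that would force a strict inequality between $h(m_1,m_2)$ and $h(\beta_1,\beta_2)$. Hence the two ordered pairs must be incomparable. After possibly interchanging the roles of the two pairs (a symmetry of the equation), this leaves the single case $m_1<\beta_1\le\beta_2<m_2$; in particular $m_1<m_2$ strictly and $m_1<\beta_1$.

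The decisive step, and the one I expect to be the real obstacle, is arithmetic rather than analytic: monotonicity alone cannot close this incomparable case, so I would compare $p$-adic valuations. Because $1+p^{k+1}$ is a unit at $p$ for every $k\ge 0$, the original equation forces $v_p(p^{m_1}+p^{m_2})=v_p(p^{\beta_1}+p^{\beta_2})$. Here $m_1<m_2$ yields $v_p(p^{m_1}+p^{m_2})=m_1$ exactly, whereas $v_p(p^{\beta_1}+p^{\beta_2})\ge\beta_1>m_1$, the inequality remaining strict even when $\beta_1=\beta_2$, since a carry at $p=2$ only raises that valuation to $\beta_1+1$. This contradicts the equality of valuations, so no incomparable case can occur and the pairs must coincide. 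The only subtlety to watch is precisely this $p=2$ equal-exponent carry, which I have just noted affects only the $\beta$-side and harmlessly increases its valuation, leaving the contradiction intact.
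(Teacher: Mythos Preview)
Your proof is correct. The overall strategy matches the paper's---a monotonicity step followed by a divisibility-at-$p$ step---but your execution is tidier in two places.

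First, where the paper fixes $m_1=\beta_1$ and differentiates the real function $f(x)=\dfrac{p^{m_1+x+1}+1}{1+p^{x-m_1}}$ to obtain injectivity, you instead work with the symmetric ratio $h(a,b)=\dfrac{p^a+p^b}{1+p^{a+b+1}}$ and get strict monotonicity from the clean algebraic factorization $(p^{a_1}-p^{a_2})(1-p^{2b+1})$. These are equivalent (indeed $f(x)=p^{m_1}/h(m_1,x)$), but your route avoids calculus entirely and makes the symmetry in the two variables transparent.

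Second, and more substantively, the paper handles the case $m_1<\beta_1$ by reducing modulo $p$, which for $p=2$ forces a separate page-long argument tracking parities and special exponent relations. Your $p$-adic valuation argument sidesteps this: once you have reduced to the incomparable configuration $m_1<\beta_1\le\beta_2<m_2$, the equality $v_p(p^{m_1}+p^{m_2})=v_p(p^{\beta_1}+p^{\beta_2})$ (the factors $1+p^{k+1}$ being $p$-adic units) gives $m_1\ge\beta_1$ immediately, uniformly in $p$. The only delicate point is the possible carry when $p=2$ and $\beta_1=\beta_2$, which you correctly note only \emph{raises} the right-hand valuation and so preserves the contradiction.

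In short: same skeleton, but your symmetric framing and valuation argument buy you a proof that is both calculus-free and free of the $p=2$ casework.
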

\begin{proof}
Without loss of generality, we may write $m_1=\min(m_1, m_2, \beta_1, \beta_2)$. We may also assume that $\beta_1\leq\beta_2$ so that it suffices to show that $m_1=\beta_1$ and $m_2=\beta_2$. Dividing each side of the given equation by $p^{m_1}$, we have 
\begin{equation} \label{Eq3.1}
(1+p^{m_2-m_1})(p^{\beta_1+\beta_2+1}+1)=(p^{\beta_1-m_1}+p^{\beta_2-m_1})(p^{m_1+m_2+1}+1).
\end{equation}
Suppose $m_1=\beta_1$. Then \eqref{Eq3.1} becomes $(1+p^{m_2-m_1})(p^{m_1+\beta_2+1}+1)=(1+p^{\beta_2-m_1})(p^{m_1+m_2+1}+1)$. Now, define a function $f\colon\mathbb{R}\rightarrow\mathbb{R}$ by  
\\ 
$\displaystyle{f(x)=\frac{p^{m_1+x+1}+1}{1+p^{x-m_1}}}$. We may differentiate to get 
\[f'(x)=\frac{(p^{m_1+x})(p^{2m_1+1}-1)}{(p^x+p^{m_1})^2}\log{p}>0,\]
so $f$ is one-to-one. As $f(m_2)=f(\beta_2)$, we have $m_2=\beta_2$. Therefore, we only need to show that $m_1=\beta_1$. 
\par 
Suppose $p\neq 2$. Then, if $m_1<\beta_1$, we may read \eqref{Eq3.1} modulo $p$ to reach a contradiction. Thus, if $p\neq 2$, we are done. Now, suppose $p=2$ and $m_1<\beta_1$ so that \eqref{Eq3.1} becomes 
\begin{equation} \label{Eq3.2}
(1+2^{m_2-m_1})(2^{\beta_1+\beta_2+1}+1)=(2^{\beta_1-m_1}+2^{\beta_2-m_1})(2^{m_1+m_2+1}+1).
\end{equation}
The right-hand side of \eqref{Eq3.2} is even, which implies that we must have $m_1=m_2$ so that $1+2^{m_2-m_1}=2$. Dividing each side of \eqref{Eq3.2} by $2$ yields $2^{\beta_1+\beta_2+1}+1=(2^{\beta_1-m_1-1}+2^{\beta_2-m_1-1})(2^{2m_1+1}+1)$. As the left-hand side of this last equation is odd, we must have $\beta_1=m_1+1$. Therefore, 
$2^{\beta_1+
\beta_2+1}+1=(1+2^{\beta_2-\beta_1})(2^{2\beta_1-1}+1)=2^{\beta_1+\beta_2-1}+2^{\beta_2-\beta_1}+2^{2\beta_1-1}+1$. If we subtract $2^{\beta_1+\beta_2-1}+1$ from each side of this last equation, we get $3\cdot2^{\beta_1+\beta_2-1}=2^{\beta_2-\beta_1}+2^{2\beta_1-1}$. However, $3\cdot2^{\beta_1+\beta_2-1}>2^{\beta_1+\beta_2-1}+2^{\beta_1+\beta_2-1}>2^{\beta_2-\beta_1}+2^{2\beta_1-1}$, so we have reached our final contradiction. This completes the proof. 
\end{proof}
We now possess the tools necessary to complete the proof of part $(e)$ of Theorem \ref{Thm2.2}. We do so in the following two theorems. 
\begin{theorem} \label{Thm3.1}
Let us work in a ring $\mathcal O_{\mathbb{Q}(\sqrt{d})}$ with $d\in K$, and let $n$ be an odd positive integer. Let $\pi$ be a prime such that $\pi\sim\overline{\pi}$, and let $k$ be a positive integer. Then $\pi^k$ is $n$-powerfully solitary in $\mathcal O_{\mathbb{Q}(\sqrt{d})}$. 
\end{theorem}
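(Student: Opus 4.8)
The plan is to prove the statement directly: I will show that if $I_n(z)=I_n(\pi^k)$ for some $z\in\mathcal O_{\mathbb{Q}(\sqrt{d})}\backslash\{0\}$, then necessarily $|z|=|\pi^k|$, which is exactly the assertion that $\pi^k$ has no $n$-powerful friend. Since $\pi\sim\overline\pi$, Fact~\ref{Fact1.2} gives $N(\pi)=p$ for a ramified integer prime $p$, and $\pi$ is, up to associates, the unique prime of norm $p$. A direct computation gives $I_n(\pi^k)=\delta_{-n}(\pi^k)=\sum_{j=0}^{k}p^{-jn/2}$, and because $n$ is odd I would record this in the form $a_k+b_k\sqrt p$ with $a_k,b_k\in\mathbb Q_{>0}$; by Lemma~\ref{Lem3.4} it genuinely has a $\sqrt p$ part, so $b_k\neq 0$.

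First I would locate $\pi$ inside $z$. Since $I_n(z)=I_n(\pi^k)$ has a $\sqrt p$ part, the contrapositive of Lemma~\ref{Lem3.4}, together with the fact that $\pi$ is the only prime of norm $p$, forces $\pi\mid z$. Write $z\sim\pi^m w$ with $m\geq 1$ and $\pi\nmid w$, so that multiplicativity (Theorem~\ref{Thm2.2}(b)) yields $I_n(\pi^m)I_n(w)=I_n(\pi^k)$. Both $I_n(\pi^m)$ and $I_n(\pi^k)$ lie in $\mathbb Q(\sqrt p)$, so $I_n(w)=I_n(\pi^k)/I_n(\pi^m)$ lies in $\mathbb Q(\sqrt p)$ and hence has the form $e+f\sqrt p$; but as in the proof of Lemma~\ref{Lem3.4} the hypothesis $\pi\nmid w$ means $I_n(w)$ has no $\sqrt p$ part, so $f=0$ and $I_n(w)=c$ for some rational $c\geq 1$. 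Applying Lemma~\ref{Lem3.4} once more, $w$ can have no prime factor of prime norm at all, so $w$ is a unit times a product of inert integer primes.

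Writing $I_n(\pi^m)=a_m+b_m\sqrt p$ with $a_m,b_m>0$, the relation splits into $c\,a_m=a_k$ and $c\,b_m=b_k$, and monotonicity (Theorem~\ref{Thm2.2}(d)) with $c\geq 1$ gives $m\leq k$. The decisive input is $p$-adic. On one hand, every inert prime power satisfies $v_p\bigl(I_n(q^\alpha)\bigr)=v_p\bigl(q^{n(\alpha+1)}-1\bigr)-v_p\bigl(q^n-1\bigr)\geq 0$, since $q\neq p$ and $q^n-1\mid q^{n(\alpha+1)}-1$; multiplying over the factors of $w$ gives $v_p(c)\geq 0$. On the other hand, the closed form $\sum_{j=0}^{N}p^{-jn}=\dfrac{p^{n(N+1)}-1}{p^{nN}(p^n-1)}$ gives $v_p(a_k)=-n\lfloor k/2\rfloor$ and $v_p(b_k)=-\tfrac{n+1}{2}-n\lfloor (k-1)/2\rfloor$. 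Reading $c\,a_m=a_k$ and $c\,b_m=b_k$ through $v_p$ and using $v_p(c)\geq 0$ pins down $\lfloor m/2\rfloor=\lfloor k/2\rfloor$ and $\lfloor (m-1)/2\rfloor=\lfloor (k-1)/2\rfloor$, which force $m=k$. Then $c=1$, so $I_n(w)=1$, whence $w$ is a unit by Theorem~\ref{Thm2.2}(a) and $z\sim\pi^k$; in particular $|z|=|\pi^k|$, as needed.

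The step I expect to be the genuine obstacle is the final valuation computation, and it is worth flagging why something this delicate is required. The two scalar equations $c\,a_m=a_k$ and $c\,b_m=b_k$ are together equivalent to $a_kb_m=a_mb_k$, and this ratio condition is \emph{not} enough to determine $m$: when $m$ and $k$ are both odd it holds for every such pair, with a perfectly consistent rational value $c=b_k/b_m$. What eliminates these spurious solutions is arithmetic rather than algebra: whenever $m<k$ the required $c$ has strictly negative $p$-adic valuation, while any value of $I_n$ on a product of inert prime powers has nonnegative $p$-adic valuation. Extracting the two floor-function valuations correctly is the part that needs care, though the key divisibility $q^n-1\mid q^{n(\alpha+1)}-1$ is a valuation statement valid at every prime, so the prime $p=2$ presents no special difficulty here. (Note that Lemma~\ref{Lem3.5}, with its two pairs of exponents, is not needed in this ramified case; it is tailored to the split case, where $\pi\not\sim\overline\pi$ introduces a second exponent.)
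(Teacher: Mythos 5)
Your proof has a genuine gap at its very first step: the hypothesis $\pi\sim\overline{\pi}$ does \emph{not} imply that $N(\pi)=p$ is a ramified integer prime. By Fact~\ref{Fact1.2}, the condition $\pi\sim\overline{\pi}$ also holds when $\pi\sim q$ for an inert integer prime $q$ (then $\overline{\pi}\sim\overline{q}=q\sim\pi$, and $N(\pi)=q^2$ is not a prime). The three alternatives in Fact~\ref{Fact1.2} are mutually exclusive as whole packages, but the single condition $\pi\sim\overline{\pi}$ is shared by the first two of them, so you have only proved the theorem for ramified $\pi$ and omitted inert $\pi$ entirely. For inert $\pi$ your machinery collapses: $I_n(\pi^k)\in\mathbb{Q}$, so there is no $\sqrt{p}$ part to exploit, and Lemma~\ref{Lem3.4} does not even apply to $\pi$ (it requires $N(\pi)$ to be an integer prime). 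The omission matters structurally, since Theorem~\ref{Thm3.1} (together with Corollary~\ref{Cor3.1}) is what completes part (e) of Theorem~\ref{Thm2.2} for odd $n$, and the paper devotes a separate case of its proof to exactly this situation. The patch is short and close in spirit to your argument: if $\pi\sim q$ inert and $I_n(x)=I_n(\pi^k)\in\mathbb{Q}$, then Lemma~\ref{Lem3.4} forces every prime divisor of $x$ to be associated to an inert integer prime, so $\delta_n(x)$ and $\vert x\vert$ are integers; from $\delta_n(x)q^{kn}=\delta_n(q^k)\vert x\vert^n$ and $q\nmid\delta_n(q^k)=1+q^n+\cdots+q^{kn}$ one gets $q^k\mid x$, i.e., $\pi^k\mid x$, and part (d) of Theorem~\ref{Thm2.2} finishes (equivalently, your valuation argument runs verbatim with $v_q$ in place of $v_p$).

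Two further remarks. What you call ``the contrapositive of Lemma~\ref{Lem3.4}'' is actually its \emph{converse}: you need that $\pi\nmid z$ implies $I_n(z)$ has no $\sqrt{p}$ part. This is true, but it requires the argument from inside the proof of Lemma~\ref{Lem3.4} (each prime-power factor of $z$ contributes either a rational number or an element of the form $a+b\sqrt{p_0}$ with $p_0\neq p$, and Lemma~\ref{Lem3.2} shows the product then has no $\sqrt{p}$ part); you gesture at this later, but it should be invoked where it is first used. On the positive side, once these repairs are made, your treatment of the ramified case is correct and arguably cleaner than the paper's: the paper first proves $k$ and the exponent $\alpha$ of $\pi$ in the friend have equal parity, rules out the even--even case, and then runs a divisibility contradiction in the odd--odd case, whereas your two floor-function valuation identities, combined with $v_p(I_n(w))\geq 0$ for the inert part $w$, dispose of all parities at once and directly yield $m=k$, $w$ a unit.
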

\begin{proof}
We suppose, for the sake of finding a contradiction, that there exists $x\!\in\!\mathcal O_{\mathbb{Q}(\sqrt{d})}\backslash\{0\}$ such that $\vert x\vert\neq\vert\pi^k\vert$ and $I_n(x)=I_n(\pi^k)$. Suppose that $\pi_0$ is a prime such that $\pi_0\vert x$ and $N(\pi_0)=p_0$ is an integer prime. Then, by Lemma \ref{Lem3.4}, $I_n(x)$ has a $\sqrt{p_0}$ part. This implies that $I_n(\pi^k)$ has a $\sqrt{p_0}$ part. However, if $N(\pi)\!=\!p$, where $p$ is an integer prime, then $I_n(\pi^k)=\displaystyle{\sum_{m=0}^k\frac{1}{\sqrt{p}^{mn}}=t_1+t_2\sqrt{p}}$ for some $t_1, t_2\in\mathbb{Q}$. Hence, we find that $p_0=p$, which means that $\pi_0\sim\pi$. On the other hand, if $\pi$ is associated to an inert integer prime $q$, then $I_n(\pi^k)\in\mathbb{Q}$. Therefore, if a prime that is not associated to $\pi$  divides $x$, that prime must be associated to an inert integer prime. We now consider two cases.
\par 
Case 1: In this case, $\pi\sim q$, where $q$ is an inert integer prime. This implies that all primes dividing $x$ must be associated to inert integer primes, so $\delta_n(x)$ and $\vert x\vert$ are integers. From $I_n(x)=I_n(\pi^k)$ and $\vert\pi^k\vert^n=q^{kn}$, we have $\delta_n(x)q^{kn}=\delta_n(\pi^k)\vert x\vert^n$. We know that $\displaystyle{\delta_n(\pi^k)=\sum_{j=0}^k\vert\pi^j\vert^n=1+\sum_{j=1}^kq^{jn}}$, so $q\nmid\delta_n(\pi^k)$ in $\mathbb{Z}$. Therefore, $q^{kn}$ divides $\vert x\vert^n$ in $\mathbb{Z}$, so $q^k$ divides $\vert x\vert$ in $\mathbb{Z}$. We conclude that $q^k\vert x$, so $\pi^k\vert x$. However, part $(d)$ of Theorem \ref{Thm2.2} tells us that this is a contradiction. 
\par 
Case 2: In this case, $N(\pi)=p$ is an integer prime. Because all of the prime divisors of $x$ that are not associated to $\pi$ must be associated to inert integer 
primes, we may write $\displaystyle{x\sim\pi^\alpha\prod_{j=1}^tq_j^{\beta_j}}$, where $\alpha\in\mathbb{N}_0$ and, for each $j\in\{1, 2, \ldots, t\}$, $q_j$ is an inert integer prime and $\beta_j$ is a positive integer. 
Note that $\alpha\geq 1$ because $I_n(\pi^k)$ has a $\sqrt{p}$ part, which implies that $I_n(x)$ has a $\sqrt{p}$ part. Also, $\alpha<k$ because, otherwise, $\pi^k\vert x$, from which part $(d)$ of Theorem \ref{Thm2.2} yields a contradiction. We have 
\[I_n(\pi^k)=\frac{\sum_{l=0}^k\sqrt{p}^{ln}}{\sqrt{p}^{kn}}=\frac{\sqrt{p}^{(k+1)n}-1}{\sqrt{p}^{kn}(\sqrt{p}^n-1)},\]
and 
\[I_n(\pi^\alpha)=\frac{\sum_{l=0}^\alpha\sqrt{p}^{ln}}{\sqrt{p}^{\alpha n}} =\frac{\sqrt{p}^{(\alpha+1)n}-1}{\sqrt{p}^{\alpha n}(\sqrt{p}^n-1)}.\]
Now, $\displaystyle{\frac{I_n(\pi^k)}{I_n(\pi^\alpha)}=I_n\left(\prod_{j=1}^tq_j^{\beta_j}\right)\in\mathbb{Q}}$ because each integer prime $q_j$ is inert. This implies that $\displaystyle{(p^{(\alpha+1)n}-1)\frac{I_n(\pi^k)}{I_n(\pi^\alpha)}\in\mathbb{Q}}$. We have 
\[(p^{(\alpha+1)n}-1)\frac{I_n(\pi^k)}{I_n(\pi^\alpha)}=(p^{(\alpha+1)n}-1)\frac{\sqrt{p}^{(k+1)n-1}}{(\sqrt{p}^{(\alpha+1)n}-1)\sqrt{p}^{(k-\alpha)n}}\]
\[=(\sqrt{p}^{(\alpha+1)n}-1)(\sqrt{p}^{(\alpha+1)n}+1)\frac{\sqrt{p}^{(k+1)n}-1}{(\sqrt{p}^{(\alpha+1)n}-1)\sqrt{p}^{(k-\alpha)n}}\] 
\[=\frac{(\sqrt{p}^{(k+1)n}-1)(\sqrt{p}^{(\alpha+1)n}+1)}{\sqrt{p}^{(k-\alpha)n}}\in\mathbb{Q}.\] If $k$ is odd, then $\sqrt{p}^{(k+1)n}-1$ is 
rational, which implies that $\alpha$ must also be odd. Similarly, if $\alpha$ is odd, then $k$ must be odd. Therefore, $k$ and $\alpha$ have the same parities, which 
implies that $\sqrt{p}^{(k-\alpha)n}$ is rational. This implies $(\sqrt{p}^{(k
+1)n}-1)(\sqrt{p}^{(\alpha+1)n}+1)\in
\mathbb{Q}$. We clearly have a contradiction if $k$ and $\alpha$ are both even, so they must both be odd. As $k$ is odd, we have 
\[I_n(\pi^k)=\delta_{-n}(\pi^k)=\sum_{l=0}^k\frac{1}{\sqrt{p}^{ln}}=\left(\sum_{m=0}^{\frac{k-1}{2}}\frac{1}{\sqrt{p}^{2mn}}\right)+\left(\sum_{m=0}^{\frac{k-1}{2}}\frac{1}{\sqrt{p}^{2mn}}\right)\left(\frac{1}{\sqrt{p}^n}\right)\] 
\[=\left(\sum_{m=0}^{\frac{k-1}{2}}\frac{1}{p^{mn}}\right)\left(1+\frac{1}{\sqrt{p}^n}\right)=\frac{h_1}{p^n-1}\left(1+\frac{1}{\sqrt{p}^n}\right),\]
where 
\[h_1=\left(\sum_{m=0}^{\frac{k-1}{2}}\frac{1}{p^{mn}}\right)\left(p^n-1\right)=\frac{p^{\frac{k+1}{2}n}-1}{p^{\frac{k-1}{2}n}}.\]
Similarly, if we write $\displaystyle{h_2=\frac{p^{\frac{\alpha+1}{2}n}-1}{p^{\frac{\alpha-1}{2}n}}}$, then we have 
\\ 
$I_n(\pi^\alpha)=\displaystyle{\frac{h_2}{p^n-1}\left(1+\frac{1}{\sqrt{p}^n}\right)}$. Now,
\[I_n\left(\prod_{j=1}^tq_j^{\beta_j}\right)=\frac{I_n(\pi^k)}{I_n(\pi^\alpha)}=\frac{h_1}{h_2}=\frac{p^{\frac{k+1}{2}n}-1}{p^{\frac{k-\alpha}{2}n}\left(p^{\frac{\alpha+1}{2}n}-1\right)},\]
so
\[\left[\delta_n\left(\prod_{j=1}^tq_j^{\beta_j}\right)\right]\left[p^{\frac{k-\alpha}{2}n}\right]\left[p^{\frac{\alpha+1}{2}n}-1\right]=\left[\left\lvert\prod_{j=1}^tq_j^{\beta_j}\right\rvert^n\right]\left[p^{\frac{k+1}{2}n}-1\right].\] Notice that each bracketed expression in this last equation is an integer, and notice that $p$ divides the left-hand side in $\mathbb{Z}$. However, $p$ does not divide the right-hand side in $\mathbb{Z}$, so we have a contradiction. 
\end{proof}
We now only have to prove part $(e)$ of Theorem \ref{Thm2.2}
for the case in which $n$ is odd and $\pi\not\sim\overline{\pi}$. We do so as a corollary of the following more general theorem. 
\begin{theorem} \label{Thm3.2}
Let us work in a ring $\mathcal O_{\mathbb{Q}(\sqrt{d})}$ with $d\in K$, and let $n$ be an odd positive integer. Let $\pi$ be a prime such that $\pi\not\sim\overline{\pi}$, and let $k_1, k_2$ be nonnegative integers. Then $\pi^{k_1}\overline{\pi}^{k_2}$ is $n$-powerfully solitary in $\mathcal O_{\mathbb{Q}(\sqrt{d})}$ unless, possibly, if $k_1$ and $k_2$ are both odd. In the case that $k_1$ and $k_2$ are both odd, any friend of $\pi^{k_1}\overline{\pi}^{k_2}$, say $x$, must satisfy $\displaystyle{x\sim\pi^{\alpha_1}\overline{\pi}^{\alpha_2}\prod_{j=1}^tq_j^{\gamma_j}}$ , where $\alpha_1, \alpha_2$ are odd positive integers and, for each $j\in\{1, 2, \ldots, t\}$, $q_j$ is an inert integer prime and $\gamma_j$ is a positive integer. 
\end{theorem}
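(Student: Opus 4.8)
The plan is to assume that some $x\in\mathcal O_{\mathbb Q(\sqrt d)}\setminus\{0\}$ satisfies $I_n(x)=I_n(\pi^{k_1}\overline\pi^{k_2})$ with $\lvert x\rvert\neq\lvert\pi^{k_1}\overline\pi^{k_2}\rvert$, and to pin down $x$. Write $N(\pi)=p$, so $\lvert\pi\rvert=\lvert\overline\pi\rvert=\sqrt p$. Since $\pi\not\sim\overline\pi$, multiplicativity (Corollary \ref{Cor2.1}, Theorem \ref{Thm2.2}(b)) gives $I_n(\pi^{k_1}\overline\pi^{k_2})=g(k_1)g(k_2)$, where $g(\alpha):=I_n(\pi^\alpha)=\sum_{m=0}^\alpha \tau^{-m}$ and $\tau:=\sqrt p^{\,n}$ (so $\tau^2=p^n\in\mathbb Q$ but $\tau\notin\mathbb Q$, as $n$ is odd). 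Expanding shows $I_n(\pi^{k_1}\overline\pi^{k_2})\in\mathbb Q(\sqrt p)$, so its only possible $\sqrt{p_0}$ part is for $p_0=p$. Hence, by Lemma \ref{Lem3.4}, every prime divisor of $x$ whose norm is an integer prime has norm $p$, i.e.\ is associated to $\pi$ or $\overline\pi$, while every other prime divisor is associated to an inert integer prime (Fact \ref{Fact1.2}). Thus $x\sim\pi^{\alpha_1}\overline\pi^{\alpha_2}\prod_{j=1}^t q_j^{\gamma_j}$ with each $q_j$ inert, and multiplicativity reduces the friend equation to
\[ g(k_1)g(k_2)=R\,g(\alpha_1)g(\alpha_2),\qquad R:=I_n\Big(\prod_{j=1}^t q_j^{\gamma_j}\Big)\in\mathbb Q_{\ge 1}, \]
where $R=1$ if and only if there are no inert factors (Theorem \ref{Thm2.2}(a)).

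Next I would extract parity information by conjugating in $\mathbb Q(\sqrt p)$. Let $\phi$ be the automorphism $\sqrt p\mapsto-\sqrt p$; then $\phi(\tau)=-\tau$ and $\phi(R)=R$, so applying $\phi$ to the displayed equation and dividing eliminates $R$ and yields $\rho(k_1)\rho(k_2)=\rho(\alpha_1)\rho(\alpha_2)$, where $\rho(\alpha):=g(\alpha)/\phi(g(\alpha))$. A direct computation gives $\rho(\alpha)=\frac{\tau+1}{\tau-1}$ for every odd $\alpha$, whereas for even $\alpha$ one gets $\rho(\alpha)=\frac{\tau+1}{\tau-1}\,r(\alpha)$ with $r(\alpha)=\frac{\tau^{\alpha+1}-1}{\tau^{\alpha+1}+1}\in(0,1)$, strictly increasing in $\alpha$. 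Cancelling the common factors reduces the relation to $\prod_{k_i\text{ even}} r(k_i)=\prod_{\alpha_i\text{ even}}r(\alpha_i)$. When $k_1,k_2$ are both odd the left side is the empty product $1$, and since each $r$-value lies in $(0,1)$ the right side equals $1$ only if it is empty; hence $\alpha_1,\alpha_2$ are both odd, so in particular both positive. This is precisely part B.

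It remains to show that if $k_1,k_2$ are not both odd then no friend exists. Here I would run a short case analysis governed by the rational/$\sqrt p$ split of the main equation (write each $g$-value as a rational part plus a rational multiple of $\sqrt p$, separate the two parts, and eliminate $R$). The organizing dichotomy is whether the number of even exponents among $\{k_1,k_2\}$ equals the number among $\{\alpha_1,\alpha_2\}$. If these numbers differ, the split collapses to an identity of the shape $p^{nA}+p^{nB}=p^{nC}+p^{nD}$ with $\{A,B\}\neq\{C,D\}$ (or to something even more degenerate), which is impossible by uniqueness of base-$p^n$ digits together with a crude size estimate. If the numbers are equal and both are $2$ (all four exponents even), the split is exactly the hypothesis of Lemma \ref{Lem3.5} with $p$ replaced by $p^n$, forcing $\{k_1,k_2\}=\{\alpha_1,\alpha_2\}$. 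If the numbers are equal and both are $1$ (one even and one odd exponent on each side), the split first forces the even exponents to coincide, say $\alpha_1=k_1$, reducing the equation to $g(k_2)=R\,g(\alpha_2)$ with $k_2,\alpha_2$ odd; this is the configuration resolved at the end of the proof of Theorem \ref{Thm3.1}, whose divisibility argument ($p$ divides one side but not the other in $\mathbb Z$) forces $\alpha_2=k_2$. In every surviving case $\{k_1,k_2\}=\{\alpha_1,\alpha_2\}$, so $R=1$, there are no inert factors, and $x\sim\pi^{k_1}\overline\pi^{k_2}$; hence $\lvert x\rvert=\lvert\pi^{k_1}\overline\pi^{k_2}\rvert$, contradicting the hypothesis that $x$ is a friend.

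The main obstacle is the bookkeeping of this parity analysis rather than any single deep step: computing the rational/$\sqrt p$ split correctly in each configuration, and verifying that Lemma \ref{Lem3.5} genuinely applies with base $p^n$ instead of a prime. The mod-$p$ reading in that lemma works verbatim for odd $p$, and the $p=2$ argument survives with $2$-adic valuations, but this should be checked explicitly. The other delicate point is confirming that the residual odd/odd equation $g(k_2)=R\,g(\alpha_2)$ really does reduce to the Theorem \ref{Thm3.1} divisibility contradiction, i.e.\ that it admits no spurious solution in which a product of inert prime powers supplies the rational factor $R$.
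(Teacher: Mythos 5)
Your proof is correct, and its skeleton is the same as the paper's: assume a friend $x$ exists, use Lemma \ref{Lem3.4} and Fact \ref{Fact1.2} to force $x\sim\pi^{\alpha_1}\overline{\pi}^{\alpha_2}\prod_{j=1}^{t}q_j^{\gamma_j}$ with the $q_j$ inert, run a parity analysis on $k_1,k_2,\alpha_1,\alpha_2$, settle the all-even configuration with Lemma \ref{Lem3.5}, and settle the mixed-parity configuration with the divisibility contradiction ($p$ divides one side of a bracketed-integer identity but not the other) taken from Theorem \ref{Thm3.1}. What is genuinely different is your organizing device. The paper eliminates the rational factor $R$ by multiplying through by $(p^{(\alpha_1+1)n}-1)(p^{(\alpha_2+1)n}-1)$ and then, in seven hand-expanded cases, compares rational and $\sqrt{p}$ parts of products of terms $p^{m}\sqrt{p}\pm1$. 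You instead apply the automorphism $\sqrt{p}\mapsto-\sqrt{p}$ of $\mathbb{Q}(\sqrt{p})$, which fixes $R$, and divide the friend equation by its conjugate, collapsing everything to the single relation $\prod_{k_i\text{ even}}r(k_i)=\prod_{\alpha_i\text{ even}}r(\alpha_i)$ with $r(\alpha)=(\tau^{\alpha+1}-1)/(\tau^{\alpha+1}+1)\in(0,1)$ strictly increasing. Monotonicity and injectivity of $r$ then instantly dispose of the paper's Cases 1, 4, and 5, prove the characterization of friends when $k_1,k_2$ are both odd, and force the even exponents to coincide in the mixed case (the first half of the paper's Case 7) --- a real economy compared with the paper's expansions. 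The points you flagged do check out. When the even-counts differ with neither side empty, cross-multiplication gives $\tau^{k_1+k_2+2}+1=\tau^{\alpha+1}(\tau^{k_1+1}+\tau^{k_2+1})$, an equation in powers of $p^{n}$ whose left side is $1$ modulo $p$ and whose right side is $0$ modulo $p$ (these are the paper's Cases 2 and 3). In the all-even case, rather than reproving Lemma \ref{Lem3.5} in base $p^n$, you can substitute $m_i=\frac{(k_i+1)n-1}{2}$ and $\beta_i=\frac{(\alpha_i+1)n-1}{2}$ as the paper does, which makes your equation literally the lemma's hypothesis in base $p$; if you insist on base $P=p^n$, the lemma's delicate $p=2$ analysis is needed only when $P=2$, i.e., $n=1$, which is the lemma itself, while for $P>2$ reduction modulo $P$ suffices. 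Finally, the residual odd/odd equation $g(k)=R\,g(a)$ with $R>1$ forces $k>a$, and the bracketed-integer identity from Theorem \ref{Thm3.1} then yields the $p$-divisibility contradiction, so no product of inert prime powers can supply the factor $R$; if instead $R=1$, the inert part is a unit and $\lvert x\rvert=\lvert\pi^{k_1}\overline{\pi}^{k_2}\rvert$, the same contradiction the paper reaches.
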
 
\begin{proof}
First note that Fact \ref{Fact1.2} tells us that $N(\pi)=N(\overline{\pi})=p$, where $p$ is an integer prime. 
\par 
We suppose, for the sake of finding a contradiction, that there exists $x\in\mathcal O_{\mathbb{Q}(\sqrt{d})}\backslash\{0\}$ such that $\vert x\vert\neq\vert\pi^{k_1}\overline{\pi}^{k_2}\vert$ and $I_n(x)=I_n(\pi^{k_1}\overline{\pi}^{k_2})$. Suppose that $\pi_0$ is a prime such that $\pi_0\vert x$ and $N(\pi_0)=p_0$ is an integer prime. Then, by Lemma \ref{Lem3.4}, $I_n(x)$ has a $\sqrt{p_0}$ part. This implies that $I_n(\pi^{k_1}\overline{\pi}^{k_2})$ has a $\sqrt{p_0}$ part. However, as $N(\pi)=N(\overline{\pi})=p$, we must have $I_n(\pi^{k_1}\overline{\pi}^{k_2})=I_n(\pi^{k_1})I_n(\overline{\pi}^{k_2})=\displaystyle{\left(\sum_{m=0}^{k_1}\frac{1}{\sqrt{p}^{mn}}\right)\left(\sum_{m=0}^{k_2}\frac{1}{\sqrt{p}^{mn}}\right)}=t_1+t_2\sqrt{p}$ for some $t_1, t_2\in\mathbb{Q}$, so we find that $p_0=p$. Therefore, if a prime that is not associated to $\pi$ or $\overline{\pi}$ divides $x$, that prime must be associated to an inert integer prime. Hence, we may write $\displaystyle{x\sim\pi^{\alpha_1}\overline{\pi}^{\alpha_2}
\prod_{j=1}^tq_j^{\gamma_j}}$, where $\alpha_1, \alpha_2\in\mathbb{N}_0$ and, for each $j\in\{1, 2, \ldots, t\}$, $q_j$ is an inert integer prime and $\gamma_j$ is a positive integer. 
\par
We have
\[I_n(\pi^{k_1})=\frac{\sum_{l=0}^{k_1}\sqrt{p}^{ln}}{\sqrt{p}^{k_1n}}=\frac{\sqrt{p}^{(k_1+1)n}-1}{\sqrt{p}^{k_1n}(\sqrt{p}^n-1)},\] 
\[I_n(\overline{\pi}^{k_2})=\frac{\sum_{l=0}^{k_2}\sqrt{p}^{ln}}{\sqrt{p}^{k_2n}}=\frac{\sqrt{p}^{(k_2+1)n}-1}{\sqrt{p}^{k_2n}(\sqrt{p}^n-1)},\] 
\[I_n(\pi^{\alpha_1})=\frac{\sum_{l=0}^{\alpha_1}\sqrt{p}^{ln}}{\sqrt{p}^{\alpha_1n}}=\frac{\sqrt{p}^{(\alpha_1+1)n}-1}{\sqrt{p}^{\alpha_1n}(\sqrt{p}^n-1)},\] 
and 
\[I_n(\overline{\pi}^{\alpha_2})=\frac{\sum_{l=0}^{\alpha_2}\sqrt{p}^{ln}}{\sqrt{p}^{\alpha_2n}}=\frac{\sqrt{p}^{(\alpha_2+1)n}-1}{\sqrt{p}^{\alpha_2n}(\sqrt{p}^n-1)}.\] 
\par 
Now, $\displaystyle{\frac{I_n(\pi^{k_1})I_n(\overline{\pi}^{k_2})}{I_n(\pi^{\alpha_1})I_n(\overline{\pi}^{\alpha_2})}=\frac{I_n(\pi^{k_1}\overline{\pi}^{k_2})}{I_n(\pi^{\alpha_1}\overline{\pi}^{\alpha_2})}=I_n\left(\prod_{j=1}^tq_j^{\gamma_j}\right)\in\mathbb{Q}}$ because each integer prime $q_j$ is inert. This implies that \\ 
$\displaystyle{(p^{(\alpha_1+1)n}-1)(p^{(\alpha_2+1)n}-1)\frac{I_n(\pi^{k_1})I_n(\overline{\pi}^{k_2})}{I_n(\pi^{\alpha_1})I_n(\overline{\pi}^{\alpha_2})}\in\mathbb{Q}}$. We have 
\[(p^{(\alpha_1+1)n}-1)(p^{(\alpha_2+1)n}-1)\frac{I_n(\pi^{k_1})I_n(\overline{\pi}^{k_2})}{I_n(\pi^{\alpha_1})I_n(\overline{\pi}^{\alpha_2})}\]
\[=(p^{(\alpha_1+1)n}-1)(p^{(\alpha_2+1)n}-1)\frac{(\sqrt{p}^{(k_1+1)n}-1)(\sqrt{p}^{(k_2+1)n}-1)}{(\sqrt{p}^{(\alpha_1+1)n}-1)(\sqrt{p}^{(\alpha_2+1)n}-1)\sqrt{p}^{(k_1+k_2-\alpha_1-\alpha_2)n}}\]
\[=\frac{(\sqrt{p}^{(k_1+1)n}-1)(\sqrt{p}^{(k_2+1)n}-1)(\sqrt{p}^{(\alpha_1+1)n}+1)(\sqrt{p}^{(\alpha_2+1)n}+1)}{\sqrt{p}^{(k_1+k_2-\alpha_1-\alpha_2)n}}\in\mathbb{Q}.\]
\par 
We now consider several cases. In what follows, we will write \\ 
$\displaystyle{m_1=\frac{(k_1+1)n-1}{2}}$, $\displaystyle{m_2=\frac{(k_2+1)n-1}{2}}$, $\displaystyle{\beta_1=\frac{(\alpha_1+1)n-1}{2}}$, and \\
$\displaystyle{\beta_2=\frac{(\alpha_2+1)n-1}{2}}$. This will simplify notation because, for example, if $k_1$ is even, then $\sqrt{p}^{(k_1+1)n}=p^{m_1}\sqrt{p}$  and $m_1$ is a nonnegative integer.
\vspace{5 mm}
\\
Case 1: $\alpha_1\not\equiv\alpha_2\equiv k_1\equiv k_2\equiv 1\imod{2}$. In this case, $(\sqrt{p}^{(k_1+1)n}-1)(\sqrt{p}^{(k_2+1)n}-1)(\sqrt{p}^{(\alpha_2+1)n}+1)\in\mathbb{Q}$, so $\displaystyle{\frac{\sqrt{p}^{(\alpha_1+1)n}+1}{\sqrt{p}^{(k_1+k_2-\alpha_1-\alpha_2)n}}\in\mathbb{Q}}$. However, this is impossible because ($\alpha_1+1)n$ is odd. By the same argument, we may show that it is impossible to have exactly one of $k_1, k_2, \alpha_1, \alpha_2$ be even. 
\vspace{5 mm}
\\
Case 2: $\alpha_1\not\equiv\alpha_2\equiv k_1\equiv k_2\equiv 0\imod{2}$. In this case, $\sqrt{p}^{(\alpha_1+1)n}-1\in\mathbb{Q}$, and $\sqrt{p}^{(k_1+k_2-\alpha_1-\alpha_2)n}=\mu\sqrt{p}$ for some $\mu\in\mathbb{Q}$. This implies that \[(\sqrt{p}^{(k_1+1)n}-1)(\sqrt{p}^{(k_2+1)n}-1)(\sqrt{p}^{(\alpha_2+1)n}+1)\] 
\[=(p^{m_1}\sqrt{p}-1)(p^{m_2}\sqrt{p}-1)(p^{\beta_2}\sqrt{p}+1)=\lambda\sqrt{p}\] for some $\lambda\in\mathbb{Q}$. We may expand to get 
\[(p^{m_1}\sqrt{p}-1)(p^{m_2}\sqrt{p}-1)(p^{\beta_2}\sqrt{p}+1)\] 
\[=((p^{m_1+m_2+1}+1)-(p^{m_1}+p^{m_2})\sqrt{p})(p^{\beta_2}\sqrt{p}+1)\] 
\[=(p^{m_1+m_2+1}+1-p^{\beta_2+1}(p^{m_1}+p^{m_2}))+(p^{\beta_2}(p^{m_1+m_2+1}+1)-(p^{m_1}+p^{m_2}))\sqrt{p}.\] As $m_1, m_2, \beta_2\in\mathbb{N}_0$, we find that $p^{m_1+m_2+1}+1-p^{\beta_2+1}(p^{m_1}+p^{m_2})$ and $p^{\beta_2}(p^{m_1+m_2+1}+1)-(p^{m_1}+p^{m_2})$ are integers. Therefore, from the equation $(p^{m_1+m_2+1}+1-p^{\beta_2+1}(p^{m_1}+p^{m_2}))+(p^{\beta_2}(p^{m_1+m_2+1}+1)-(p^{m_1}+p^{m_2}))\sqrt{p}=\lambda\sqrt{p}$, we have $p^{m_1+m_2+1}+1-p^{\beta_2+1}(p^{m_1}+p^{m_2})=0$. Reading this last equation modulo $p$, we have a contradiction. The same argument eliminates the case $\alpha_2\not\equiv\alpha_1\equiv k_1\equiv k_2\equiv 0\imod{2}$.
\vspace{5 mm}
\\
Case 3: $k_1\not\equiv k_2\equiv\alpha_1\equiv\alpha_2\equiv 0\imod{2}$. In this case, $\sqrt{p}^{(k_1+1)n}-1\in\mathbb{Q}$, and $\sqrt{p}^{(k_1+k_2-\alpha_1-\alpha_2)n}=\mu\sqrt{p}$ for some $\mu\in\mathbb{Q}$. 
This implies that 
\[(\sqrt{p}^{(k_2+1)n}-1)(\sqrt{p}^{(\alpha_1+1)n}+1)(\sqrt{p}^{(\alpha_2+1)n}+1)\] 
\[=(p^{m_2}\sqrt{p}-1)(p^{\beta_1}\sqrt{p}+1)(p^{\beta_2}\sqrt{p}+1)=\lambda\sqrt{p}\] for some $\lambda\in\mathbb{Q}$. 
We may expand just as we did in Case 2, and we will find $p^{m_1+m_2+1}+1+p^{\beta_2+1}(p^{m_1}+p^{m_2})=0$, which is clearly a contradiction. This same argument eliminates the case $k_2\not\equiv k_1\equiv\alpha_1\equiv\alpha_2\equiv 0\imod{2}$.
\vspace{5 mm}
\\ 
Case 4: $k_1\equiv k_2\equiv 1\imod{2}$, and $\alpha_1\equiv\alpha_2\equiv 0\imod{2}$. In this case, $\displaystyle{\frac{(\sqrt{p}^{(k_1+1)n}-1)(\sqrt{p}^{(k_2+1)n}-1)}{\sqrt{p}^{(k_1+k_2-\alpha_1-\alpha_2)n}}\in\mathbb{Q}}$, so we must have \\
$(\sqrt{p}^{(\alpha_1+1)n}+1)(\sqrt{p}^{(\alpha_2+1)n}+1)=(p^{\beta_1}\sqrt{p}+1)(p^{\beta_2}\sqrt{p}+1)\in\mathbb{Q}$. However, this is impossible because $\beta_1$ and $\beta_2$ are nonnegative integers. 
\vspace{5 mm}
\\ 
Case 5: $k_1\equiv k_2\equiv 0\imod{2}$, and $\alpha_1\equiv\alpha_2\equiv 1\imod{2}$. In this case, $\displaystyle{\frac{(\sqrt{p}^{(\alpha_1+1)n}+1)(\sqrt{p}^{(\alpha_2+1)n}+1)}{\sqrt{p}^{(k_1+k_2-\alpha_1-\alpha_2)n}}\in\mathbb{Q}}$, so we must have 
\\ 
$(\sqrt{p}^{(k_1+1)n}-1)(\sqrt{p}^{(k_2+1)n}-1)=(p^{m_1}\sqrt{p}-1)(p^{m_2}\sqrt{p}-1)\in\mathbb{Q}$. However, this is impossible because $m_1$ and $m_2$ are nonnegative integers. 
\vspace{5 mm}
\\ 
Case 6: $k_1\equiv k_2\equiv\alpha_1\equiv\alpha_2\equiv 0\imod{2}$. In this case, $\sqrt{p}^{(k_1+k_2-\alpha_1-\alpha_2)n}\in\mathbb{Q}$, so 
\[(\sqrt{p}^{(k_1+1)n}-1)(\sqrt{p}^{(k_2+1)n}-1)(\sqrt{p}^{(\alpha_1+1)n}+1)(\sqrt{p}^{(\alpha_2+1)n}+1)\] 
\[=(p^{m_1}\sqrt{p}-1)(p^{m_2}\sqrt{p}-1)(p^{\beta_1}\sqrt{p}+1)(p^{\beta_2}\sqrt{p}+1)\in\mathbb{Q}.\] 
One may verify that, after expanding this last expression and noting that $m_1$, $m_2$, $\beta_1$, and $\beta_2$ must be positive integers, we arrive at the requirement $(p^{m_1}+p^{m_2})(p^{\beta_1+\beta_2+1}+1)=(p^{\beta_1}+p^{\beta_2})(p^{m_1+m_2+1}+1)$. Lemma \ref{Lem3.5} then guarantees that either $m_1=\beta_1$ and $m_2=\beta_2$ or $m_1=\beta_2$ and $m_2=\beta_1$, which means that either $k_1=\alpha_1$ and $k_2=\alpha_2$ or $k_1=\alpha_2$ and $k_2=\alpha_1$. Then $\displaystyle{\frac{I_n(\pi^{k_1})I_n(\overline{\pi}^{k_2})}{I_n(\pi^{\alpha_1})I_n(\overline{\pi}^{\alpha_2})}=I_n\left(\prod_{j=1}^tq_j^{\gamma_j}\right)=1}$, which implies that $\displaystyle{\prod_{j=1}^tq_j^{\gamma_j}}$ is a unit. However, we then find that $\vert\pi^{k_1}\overline{\pi}^{k_2}\vert=\vert\pi^{\alpha_1}\overline{\pi}^{\alpha_2}\vert=\vert x\vert$, which we originally assumed was not true. Therefore, this case yields a contradiction. 
\vspace{5 mm}
\\ 
Case 7: $k_1\equiv\alpha_1\equiv 1\imod{2}$ and $k_2\equiv\alpha_2\equiv 0\imod{2}$. In this case, $\displaystyle{\frac{(\sqrt{p}^{(k_1+1)n}-1)(\sqrt{p}^{(\alpha_1+1)n}+1)}{\sqrt{p}^{(k_1+k_2-\alpha_1-\alpha_2)n}}\in\mathbb{Q}}$, so we must have 
\\ 
$(\sqrt{p}^{(k_2+1)n}-1)(\sqrt{p}^{(\alpha_2+1)n}+1)=(p^{m_2}\sqrt{p}-1)(p^{\beta_2}\sqrt{p}+1)\in\mathbb{Q}$. Writing $(p^{m_2}\sqrt{p}-1)(p^{\beta_2}\sqrt{p}+1)=(p^{m_2+\beta_2+1}-1)+(p^{m_2}-p^{\beta_2})\sqrt{p}$ and noting that $m_2$ and $\beta_2$ are 
nonnegative integers, we find that $m_2=\beta_2$. Therefore, $k_2=\alpha_2$, so $\displaystyle{I_n\left(\prod_{j=1}^tq_j^{\gamma_j}\right)=\frac{I_n(\pi^{k_1})I_n(\overline{\pi}^{k_2})}{I_n(\pi^{\alpha_1})I_n(\overline{\pi}^{\alpha_2})}=\frac{I_n(\pi^{k_1})}{I_n(\pi^{\alpha_1})}}$. Because $\displaystyle{I_n\left(\prod_{j=1}^tq_j^{\gamma_j}\right)>1}$, we see that $\alpha_1<k_1$. As $k_1$ is odd, we have 
\[I_n(\pi^{k_1})=\delta_{-n}(\pi^{k_1})=\sum_{l=0}^{k_1}\frac{1}{\sqrt{p}^{ln}} =\left(\sum_{r=0}^{\frac{k_1-1}{2}}\frac{1}{\sqrt{p}^{2rn}}\right)+\left(\sum_{r=0}^{\frac{k_1-1}{2}}\frac{1}{\sqrt{p}^{2rn}}\right)\left(\frac{1}{\sqrt{p}^n}\right)\]
\[=\left(\sum_{r=0}^{\frac{k_1-1}{2}}\frac{1}{p^{rn}}\right)\left(1+\frac{1}{\sqrt{p}^n}\right)=\frac{h_1}{p^n-1}\left(1+\frac{1}{\sqrt{p}^n}\right),\]
where 
\[h_1=\left(\sum_{r=0}^{\frac{k_1-1}{2}}\frac{1}{p^{rn}}\right)\left(p^n-1\right)=\frac{p^{\frac{k_1+1}{2}n}-1}{p^{\frac{k_1-1}{2}n}}.\]
Similarly, if we write $\displaystyle{h_2=\frac{p^{\frac{\alpha_1+1}{2}n}-1}{p^{\frac{\alpha_1-1}{2}n}}}$, then we have 
\\ 
$\displaystyle{I_n(\pi^{\alpha_1})=\frac{h_2}{p^n-1}(1+\frac{1}{\sqrt{p}^n})}$. Now,
\[I_n\left(\prod_{j=1}^tq_j^{\gamma_j}\right)=\frac{I_n(\pi^{k_1})}{I_n(\pi^{\alpha_1})}=\frac{h_1}{h_2}=\frac{p^{\frac{k_1+1}{2}n}-1}{p^{\frac{k_1-\alpha_1}{2}n}(p^{\frac{\alpha_1+1}{2}n}-1)},\]
so 
\[\left[\delta_n\left(\prod_{j=1}^tq_j^{\gamma_j}\right)\right]\left[p^{\frac{k_1-\alpha_1}{2}n}\right]\left[p^{\frac{\alpha_1+1}{2}n}-1\right]=\left[\left\lvert\prod_{j=1}^tq_j^{\gamma_j}\right\rvert^n\right]\left[p^{\frac{k_1+1}{2}n}-1\right].\] 
Now, each bracketed part of this last equation is an integer, and $p$ divides the left-hand side in $\mathbb{Z}$. However, $p$ does not divide the right-hand side in $\mathbb{Z}$, so we have a contradiction. We may use this same argument to find contradictions in the three other cases in which $k_1\not\equiv k_2\imod{2}$ and $\alpha_1\not\equiv \alpha_2\imod{2}$. 
\par 
One may check that we have found contradictions for all of the possible choices of parities of $k_1$, $k_2$, $\alpha_1$, and $\alpha_2$ except the case in which all four are odd. Therefore, the proof is complete. 
\end{proof}
\begin{corollary} \label{Cor3.1}
Let $d\in K$, and let $k, n\in\mathbb{N}$ with $n$ odd. If $\pi$ is a prime in $\mathcal O_{\mathbb{Q}(\sqrt{d})}$ such that $\pi\not\sim\overline{\pi}$, then $\pi^k$ is $n$-powerfully solitary in $\mathcal O_{\mathbb{Q}(\sqrt{d})}$. 
\end{corollary}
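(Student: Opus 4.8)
The plan is to derive this as an immediate special case of Theorem \ref{Thm3.2}. The key observation is that $\pi^k$ can be written as $\pi^{k_1}\overline{\pi}^{k_2}$ with $k_1 = k$ and $k_2 = 0$. Since $\pi \not\sim \overline{\pi}$ by hypothesis, and since $k_1 = k$ and $k_2 = 0$ are nonnegative integers, the element $\pi^k$ falls squarely within the scope of Theorem \ref{Thm3.2}, which concerns exactly the $n$-powerful solitude of numbers of the form $\pi^{k_1}\overline{\pi}^{k_2}$ for $n$ odd.

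The heart of the argument is then to invoke the dichotomy established by that theorem: the number $\pi^{k_1}\overline{\pi}^{k_2}$ is guaranteed to be $n$-powerfully solitary \emph{except possibly} when both $k_1$ and $k_2$ are odd. Here $k_2 = 0$ is even, so the exceptional case simply does not arise. Hence the ``unless'' clause is vacuous for our choice of exponents, and Theorem \ref{Thm3.2} directly yields that $\pi^k = \pi^{k}\overline{\pi}^{0}$ is $n$-powerfully solitary in $\mathcal O_{\mathbb{Q}(\sqrt{d})}$, completing the proof.

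I do not anticipate a genuine obstacle in this corollary: all the substantive work—the case analysis on the parities of the exponents, the appeal to Lemma \ref{Lem3.5}, and the $\sqrt{p}$-part machinery of Lemma \ref{Lem3.4}—has already been carried out in the proof of Theorem \ref{Thm3.2}. The only point worth stating carefully is that Theorem \ref{Thm3.2} was deliberately phrased to permit one of the exponents to be zero (it allows $k_1, k_2 \in \mathbb{N}_0$), so that the degenerate factor $\overline{\pi}^0 = 1$ causes no difficulty, and that $k \geq 1$ from the corollary's hypothesis is compatible with this specialization. With those matched, the corollary is a one-line consequence.
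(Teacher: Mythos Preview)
Your proof is correct and matches the paper's own argument essentially verbatim: set $k_1 = k$, $k_2 = 0$ in Theorem~\ref{Thm3.2} and observe that $k_2 = 0$ is even, so the exceptional ``both odd'' case does not arise. (The paper's printed proof cites Theorem~\ref{Thm3.1}, but this is evidently a typo for Theorem~\ref{Thm3.2}, exactly as you invoke it.)
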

\begin{proof}
Setting $k_1=k$ and $k_2=0$ in Theorem \ref{Thm3.1}, we find that $\pi^k$ is $n$-powerfully solitary in $\mathcal O_{\mathbb{Q}(\sqrt{d})}$ because $k_2$ is even. 
\end{proof}
\begin{corollary} \label{Cor3.2}
Let $d\!\in\! K$, and let $p$ be an integer prime. Let $k$ be a positive integer that is either even or equal to $1$, and let $n$ be an odd positive integer. If $z\sim p^k$, then $z$ is $n$-powerfully solitary in $\mathcal O_{\mathbb{Q}(\sqrt{d})}$. 
\end{corollary}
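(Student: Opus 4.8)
The plan is to reduce everything to the behavior of the integer prime $p$ in $\mathcal O_{\mathbb{Q}(\sqrt{d})}$, which by Fact \ref{Fact1.1} is one of inert, ramified, or split, and then invoke the solitariness results already established. Since $I_n$ and the absolute value are invariant under passing to associates, the notion of $n$-powerful solitariness depends only on the associate class of $z$, so I may assume $z=p^k$ throughout.

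First I would dispose of the two easy cases. If $p$ is inert, then $p$ is itself a prime $\pi$ of the ring, so $z\sim\pi^k$ is a prime power and part $(e)$ of Theorem \ref{Thm2.2} immediately gives that $z$ is $n$-powerfully solitary. If $p$ ramifies, write $p\sim\pi^2$ with $\pi\sim\overline{\pi}$; then $z\sim\pi^{2k}$, and since $2k$ is a positive integer, Theorem \ref{Thm3.1} applies directly to conclude that $z$ is $n$-powerfully solitary. Note that neither of these cases uses the hypothesis that $k$ is even or equal to $1$.

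The only case where that hypothesis is needed is when $p$ splits, say $p=\pi\overline{\pi}$ with $\pi\not\sim\overline{\pi}$, so that $z\sim\pi^k\overline{\pi}^k$. Here I would set $k_1=k_2=k$ in Theorem \ref{Thm3.2}. If $k$ is even, then $k_1$ and $k_2$ are not both odd, so Theorem \ref{Thm3.2} already guarantees that $\pi^k\overline{\pi}^k$ is $n$-powerfully solitary, and we are done. The genuinely interesting subcase is $k=1$, where $k_1=k_2=1$ are both odd and Theorem \ref{Thm3.2} does not directly yield solitariness; instead it only tells us that any friend $x$ of $\pi\overline{\pi}$ must have the shape $x\sim\pi^{\alpha_1}\overline{\pi}^{\alpha_2}\prod_{j=1}^tq_j^{\gamma_j}$ with $\alpha_1,\alpha_2$ odd positive integers. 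The key observation is that $\alpha_1,\alpha_2\geq 1$, so $\pi\overline{\pi}\mid x$. Applying the strict monotonicity of part $(d)$ of Theorem \ref{Thm2.2}, I get $I_n(\pi\overline{\pi})\leq I_n(x)$ with equality if and only if $x\sim\pi\overline{\pi}$; but $x$ being a friend forces $I_n(x)=I_n(\pi\overline{\pi})$, hence $x\sim\pi\overline{\pi}\sim p\sim z$ and $\vert x\vert=\vert z\vert$, contradicting the requirement that a friend satisfy $\vert x\vert\neq\vert z\vert$. Thus $\pi\overline{\pi}$ has no friend.

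I expect the split subcase $k=1$ to be the only real content, and it is also exactly the step that explains the hypothesis on $k$: the argument succeeds precisely because a friend of $\pi\overline{\pi}$ is automatically divisible by $\pi\overline{\pi}$, so the divisibility monotonicity of Theorem \ref{Thm2.2}$(d)$ collapses any would-be friend onto an associate of $z$. For an odd $k\geq 3$ this divisibility fails, since a candidate friend $\pi^{\alpha_1}\overline{\pi}^{\alpha_2}\prod_{j=1}^tq_j^{\gamma_j}$ with $\alpha_1$ or $\alpha_2$ smaller than $k$ need not be a multiple of $\pi^k\overline{\pi}^k$; the monotonicity trick then no longer closes the gap, which is why the statement restricts $k$ to be even or equal to $1$.
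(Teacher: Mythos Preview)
Your proof is correct and follows essentially the same approach as the paper's: handle inert and ramified $p$ via the prime-power solitariness already established, and in the split case invoke Theorem \ref{Thm3.2} directly when $k$ is even while using the divisibility $\pi\overline{\pi}\mid x$ together with part $(d)$ of Theorem \ref{Thm2.2} to rule out friends when $k=1$. The only cosmetic difference is that the paper bundles the inert and ramified cases into a single citation of Theorem \ref{Thm2.2}$(e)$ rather than separately invoking Theorem \ref{Thm3.1} for the ramified case.
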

\begin{proof}
If $p$ is inert or ramified in $\mathcal O_{\mathbb{Q}(\sqrt{d})}$, then $z\sim\pi^\alpha$ for some prime $\pi$ and some positive integer $\alpha$. Therefore, $z$ is $n$-powerfully solitary in $\mathcal O_{\mathbb{Q}(\sqrt{d})}$ by part $(e)$ of Theorem \ref{Thm2.2}. If $p$ splits in $\mathcal O_{\mathbb{Q}(\sqrt{d})}$ and $k=1$, then $z\sim\pi\overline{\pi}$. Therefore, by Theorem \ref{Thm3.2}, any friend of $z$, say $x$, must satisfy $\displaystyle{x\sim\pi^{\alpha_1}\overline{\pi}^{\alpha_2}\prod_{j=1}^tq_j^{\gamma_j}}$ , where $\alpha_1, \alpha_2$ are odd positive integers and, for each $j\in\{1, 2, \ldots, t\}$, $q_j$ is an inert integer prime and $\gamma_j$ is a positive integer. However, this implies that $\pi\overline{\pi}\vert x$, so $z\vert x$. This is a contradiction. Finally, if $p$ splits in $\mathcal O_{\mathbb{Q}(\sqrt{d})}$ and $k$ is even, then $z\sim\pi^k\overline{\pi}^k$. As $k$ is even, the result follows from Theorem \ref{Thm3.2}. 
\end{proof}
\par 	
Note that Corollary \ref{Cor3.1} delivers the final blow in the proof of part $(e)$ of Theorem \ref{Thm2.2}.
\section{Concluding Remarks and Open Questions} 
After the introduction of our generalization of the abundancy index, we quickly become inundated with new questions. We pose a few such problems, acknowledging that their difficulties could easily span a large gamut. 
\par 
To begin, we note that we have focused exclusively on rings $\mathcal O_{\mathbb{Q}(\sqrt{d})}$ with $d\!\in\! K$. One could generalize the definitions presented here to the other quadratic integer rings. While complications could surely arise in rings without unique factorization, generalizing the abundancy index to unique factorization domains $\mathcal O_{\mathbb{Q}(\sqrt{d})}$ with $d>0$ does not seem to be a highly formidable task. 
\par 
Even if we continue to restrict our attention to the rings $\mathcal O_{\mathbb{Q}(\sqrt{d})}$ with $d\!\in\! K$, we may ask some interesting questions. For example, for a given $n$, what are some examples of $n$-powerful friends in these rings? We might also ask which numbers (or which types of numbers), are $n$-powerfully solitary for a given $n$. For example, the number $21$ is solitary in $\mathbb{Z}$, so it is not difficult to show that $21$ is also solitary in $\mathcal O_{\mathbb{Q}(\sqrt{-1})}$. Furthermore, for a given element of some ring $\mathcal O_{\mathbb{Q}(\sqrt{d})}$, we might ask to find the values of $n$  for which this element is $n$-powerfully solitary. 
\begin{conjecture} \label{Conj4.1} 
Let $d\!\in\! K$. If $p$ is an integer prime and $k$ is a positive integer, then $p^k$ is $n$-powerfully solitary in $\mathcal O_{\mathbb{Q}(\sqrt{d})}$ for all positive integers $n$. As a stronger form of this conjecture, we wonder if $\pi^{\alpha_1}\overline{\pi}^{\alpha_2}$ is necessarily $n$-powerfully solitary in $\mathcal O_{\mathbb{Q}(\sqrt{d})}$ whenever $\pi$ is a prime in $\mathcal O_{\mathbb{Q}(\sqrt{d})}$ and $\alpha_1,\alpha_2,n\in\mathbb{N}$. Note that part $(e)$ of Theorem \ref{Thm2.2} and Theorem \ref{Thm3.2} resolve this issue for many cases.
\end{conjecture}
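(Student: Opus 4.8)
The plan is to first dispose of every case already covered and isolate the genuinely open configurations, and then to attack those by reducing the friendship equation to a Diophantine relation in powers of $p^n$. If $p$ is inert or ramified, then $p^k\sim\pi^{k'}$ is a single prime power, so part $(e)$ of Theorem \ref{Thm2.2} shows $p^k$ is $n$-powerfully solitary for every $n$. Thus I would assume $p$ splits, $p=\pi\overline{\pi}$ with $\pi\not\sim\overline{\pi}$, so that $p^k\sim\pi^k\overline{\pi}^k$. For odd $n$, Corollary \ref{Cor3.2} already settles $k=1$ and even $k$, while Theorem \ref{Thm3.2} reduces every remaining situation to the case in which all relevant exponents are odd; for even $n$ the single-prime-power statement of Theorem \ref{Thm2.2}$(e)$ does not apply to the genuine product $\pi^k\overline{\pi}^k$. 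Hence the two open regimes are (i) $n$ odd with $k$ odd and $k\geq 3$, and (ii) $n$ even with $p$ split. I would treat the stronger form $\pi^{\alpha_1}\overline{\pi}^{\alpha_2}$ in parallel, the only new open case being $\alpha_1,\alpha_2$ both odd, exactly as flagged by Theorem \ref{Thm3.2}.

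For regime (i) I would start from the shape of a hypothetical friend supplied by Theorem \ref{Thm3.2}: any friend $x$ of $\pi^{k}\overline{\pi}^{k}$ satisfies $x\sim\pi^{\alpha_1}\overline{\pi}^{\alpha_2}\prod_{j=1}^t q_j^{\gamma_j}$ with $\alpha_1,\alpha_2$ odd and each $q_j$ inert. Writing, for odd $j$,
\[I_n(\pi^{j})=\frac{h_j}{p^n-1}\left(1+\frac{1}{\sqrt{p}^{\,n}}\right),\qquad h_j=\frac{p^{\frac{j+1}{2}n}-1}{p^{\frac{j-1}{2}n}},\]
the common irrational factor $\bigl(1+\tfrac{1}{\sqrt{p}^{\,n}}\bigr)^2/(p^n-1)^2$ cancels from both sides of $I_n(x)=I_n(\pi^k\overline{\pi}^k)$, collapsing it to the purely rational identity
\[h_{\alpha_1}\,h_{\alpha_2}\,I_n\!\Bigl(\textstyle\prod_{j=1}^t q_j^{\gamma_j}\Bigr)=h_k^{\,2}.\]
This cancellation is the crucial reduction, and an entirely analogous one handles the stronger statement with $h_k^2$ replaced by $h_{k_1}h_{k_2}$.

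Granting first that the inert part is trivial, I would show the resulting equation $h_{\alpha_1}h_{\alpha_2}=h_k^2$ forces $\alpha_1=\alpha_2=k$, whence $|x|=|\pi^k\overline{\pi}^k|$, contradicting $|x|\neq|z|$. Substituting $h_j=p^n-p^{-\frac{j-1}{2}n}$ and clearing denominators turns this into a relation among powers of $P=p^n$ of the same flavour as the one resolved in Lemma \ref{Lem3.5}; I would attempt to prove it has no solution with $\{\alpha_1,\alpha_2\}\neq\{k,k\}$ by a reading-modulo-$p$ and size-comparison argument, splitting off the prime $p=2$ separately exactly as in Lemma \ref{Lem3.5}. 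Extending that lemma to cover this family of squared and odd-index identities is the technical heart of the trivial-inert-part subcase.

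The hard part will be the case of a \emph{nontrivial} inert part $Q=\prod_{j=1}^t q_j^{\gamma_j}$. There the equation becomes $I_n(Q)=h_k^2/(h_{\alpha_1}h_{\alpha_2})$, and the obstruction is that $I_n(Q)$ ranges over the image of $I_n$ on products of inert prime powers, a set we have no description of. Deciding whether a prescribed rational of the form $h_k^2/(h_{\alpha_1}h_{\alpha_2})$ lies in this image is precisely the analogue, in $\mathcal O_{\mathbb Q(\sqrt d)}$, of the classical question of which rationals are values of the abundancy index---the same difficulty that makes solitariness of general integers hard. Regime (ii), the even-$n$ split case, is even less tractable: there $I_n$ is rational-valued, so the $\sqrt{p}$-part machinery of Lemma \ref{Lem3.4} that pinned down the prime divisors of a friend is unavailable, and one cannot even restrict $x$ to the convenient shape $\pi^{\alpha_1}\overline{\pi}^{\alpha_2}Q$. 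For these reasons I expect the nontrivial-inert-part and even-$n$ cases to be the genuine barrier, which is consistent with the statement being posed only as a conjecture.
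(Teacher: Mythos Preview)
The statement you are addressing is Conjecture~\ref{Conj4.1}, which the paper explicitly poses as an open problem; there is no proof in the paper to compare against. Your proposal is therefore not being measured against an existing argument but against the status of the conjecture itself.

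Your triage of the cases is accurate. You correctly observe that the inert and ramified cases reduce to single prime powers and are handled by Theorem~\ref{Thm2.2}$(e)$ for all $n$; that for odd $n$ and $p$ split, Corollary~\ref{Cor3.2} covers $k=1$ and even $k$; and that Theorem~\ref{Thm3.2} pins any putative friend of $\pi^{k_1}\overline{\pi}^{k_2}$ (with $k_1,k_2$ odd) to the shape $\pi^{\alpha_1}\overline{\pi}^{\alpha_2}\prod q_j^{\gamma_j}$ with $\alpha_1,\alpha_2$ odd and $q_j$ inert. Your identification of the two residual regimes---odd $n$ with $k$ odd and $k\geq 3$ in the split case, and even $n$ in the split case---matches exactly what the paper leaves unresolved.

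Your reduction in regime~(i), cancelling the common factor $(1+p^{-n/2})^2/(p^n-1)^2$ to obtain the rational identity $h_{\alpha_1}h_{\alpha_2}\,I_n(Q)=h_k^2$, is correct and is a sensible way to frame the remaining problem. However, as you yourself note, the subcase with nontrivial inert part $Q$ requires deciding whether a specific rational lies in the range of $I_n$ restricted to products of inert prime powers, and this is precisely the kind of abundancy-image question that is open already over $\mathbb{Z}$. Your remark that regime~(ii) is worse---because for even $n$ the $\sqrt{p}$-part machinery of Lemma~\ref{Lem3.4} is unavailable and one cannot even constrain the shape of a friend---is also correct.

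In short: your proposal is not a proof, nor does it claim to be one; it is a correct decomposition of the conjecture into settled and open parts, with a plausible but incomplete line of attack on the open parts. That is the appropriate outcome here, since the paper itself offers no proof.
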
 
\section{Acknowledgments} 
The author would like to thank Professor Pete Johnson for inviting him to the 2014 REU Program in Algebra and Discrete Mathematics at Auburn University and for making that program an extremely relaxed environment that proved exceptionally conducive to research. The author would also like to thank the unknown referee for his or her careful reading.


\begin{thebibliography}{9}

\bibitem{Conrad} 
Conrad, Keith. Factoring in Quadratic Fields. Available at \\ 
http://www.math.uconn.edu/$\sim$kconrad/blurbs/ugradnumthy \\ 
/quadraticundergrad.pdf. 

\bibitem{Klazar09}
Klazar, Martin. A question on linear independence of square roots. 2009. Available at 
http://kam.mff.cuni.cz/$\sim$klazar/tomek\_pr.pdf.

\bibitem{Stark67}
Stark, H. M. A complete determination of the complex quadratic fields of class-number one. Michigan Math. J. 14 1967 1--27.

\end{thebibliography}
\end{document}